\begin{document}

\title{Further Time Regularity for Fully Non-Linear Parabolic Equations}

\author[H. A. Chang-Lara]{H\'ector A. Chang-Lara}
\address{Department of Mathematics, Columbia University, New York, NY 10027}
\email{changlara@math.columbia.edu}

\author[D. Kriventsov]{Dennis Kriventsov}
\address{Department of Mathematics, University of Texas, Austin, TX 78712}
\email{dkriventsov@math.utexas.edu}

\begin{abstract}
We establish H\"older estimates for the time derivative of solutions of fully non-linear parabolic equations that does not necessarily have $C^{2,\bar\a}$ estimates.
\end{abstract}
\subjclass{35B45, 35B65, 35K10, 35K55}
\keywords{Further regularity in time, H\"older estimates, Krylov-Safanov}

\maketitle


\section{Introduction}

We are interested in studying further regularity in time for non-homogeneous parabolic equations of the form
\begin{align*}
u_t-F(D^2u,Du,x) = f(x,t) \text{ in } B_1\times(-1,0].
\end{align*}
Our estimates do not assume that $f$ is differentiable, nor that the homogeneous problem with frozen coefficients has interior $C^{2,\bar\a}$ estimates for which the H\"older continuity of $u_t$ is already well known. Let us now recall how these cases can be treated.

We can get an estimate for $u_t$ by considering the equation obtained by taking the time derivative of the original problem. However, this relies on $f(x,\cdot) \in C^{0,1}$. Approximation techniques can be used when $f \in C^{\bar\gamma}$ and the homogeneous problem has $C^{2,\bar\a}$ estimates, see \cite{Wang92-2} or Chapter 8 from \cite{Caffarelli95}. This actually implies $u \in C^{2,\gamma}$ for every $\gamma \in (0,\bar\a)\cap(0,\bar\gamma]$, therefore by the scaling of the equation, $u(x,\cdot) \in C^{1,\gamma/2}$. Finally, if only a $C^{1,\bar\a}$ estimate is available for the homogeneous problem, then a similar approach proves that $u(x,\cdot) \in C^{0,(1+\gamma)/2}$, see \cite{Wang92-2,Caffarelli95}. H\"older estimates for $u_t$, without assuming smooth coefficients or $C^{2,\bar\a}$ estimates, seem to be unknown up to this moment.

Given that $f \in C^{0,\gamma}$, the scaling of the problem suggests that $u_t \in C^{0,\gamma}$ as well. Our main theorem establishes such an estimate for $\gamma\in(0,\bar\a)$ where $\bar\a\in(0,1)$ is the universal H\"older exponent from the Krylov-Safonov estimate.

\begin{theorem}
Let $F$ be uniformly elliptic satisfying hypothesis \eqref{eq:H} (defined in Section \ref{preliminaries}) with $F(0,0,x) = 0$ and $u$ satisfies,
\begin{align*}
u_t - F(D^2u,Du,x) = f(x,t) \text{ in the viscosity sense in } Q_1
\end{align*}
Assume that $f \in C^{0,\gamma}(Q_1)$ for some $\gamma \in (0, \bar \a)$ where $\bar \a$ is the exponent from the Krylov-Safonov theorem. Then $u_t$ exists pointwise, and for some constant $C>0$ depending on $\min(\gamma,(\bar\a-\gamma))$,
\[
 \|u_t\|_{C^{0,\gamma}(Q_{1/4})} \leq C\1\sup_{Q_1}|u| + \sup_{x\in B_1}\|f(x,\cdot)\|_{C^{0,\gamma/2}[-1,0]}\2.
\]
\end{theorem}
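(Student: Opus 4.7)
The plan is to work pointwise: I fix $(x_0,t_0)\in Q_{1/4}$, translate it to the origin, and construct $u_t(0,0)$ as the limit of a Cauchy sequence of scalars $A_k$ produced by a dyadic iteration around the base point. Concretely, for a universal $\mu\in(0,1)$ I aim to exhibit affine polynomials $\ell_k(x)$ and scalars $A_k\in\mathbb{R}$ such that, at each scale $r_k=\mu^k$,
\[
  \sup_{Q_{r_k}}\bigl|u(x,t)-\ell_k(x)-A_k t\bigr|\le C\,r_k^{1+\bar\a}
  \quad\text{and}\quad
  \sup_{|t|\le r_k^2}\bigl|u(0,t)-u(0,0)-A_k t\bigr|\le C\,r_k^{2+\gamma}.
\]
The first estimate is a standard $C^{1,\bar\a}$-style parabolic approximation of $u$; the second is a refined pointwise approximation on the time axis $\{0\}\times[-r_k^2,0]$, and this refined bound yields $|A_{k+1}-A_k|\le C\,r_k^{\gamma}$. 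Therefore $A_\infty:=\lim A_k$ exists and is the pointwise $u_t(0,0)$, with the claimed time-Hölder modulus. Running the construction at two nearby base points and comparing the resulting slopes through uniqueness of the best affine-in-$t$ fit on a common cylinder recovers the parabolic $C^{0,\gamma}$ estimate on the map $(x_0,t_0)\mapsto u_t(x_0,t_0)$.

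The inductive step is proved by a compactness/blow-up argument. After the natural rescaling, the parabolic approximation part is standard: it follows from the interior $C^{1,\bar\a}$ estimates assumed in \eqref{eq:H}, together with a bookkeeping on the rescaled forcing, whose $C^{0,\gamma}$-seminorm shrinks like $r_k^{1-\bar\a+\gamma}$. The refined estimate on the time axis is delicate: if it failed along a sequence of normalized solutions, parabolic Krylov--Safonov compactness would yield a limit profile $v_\infty$ solving a \emph{frozen-coefficient, autonomous homogeneous} equation $(v_\infty)_t=\bar F(D^2 v_\infty,Dv_\infty)$ on $Q_1$ for which the improvement still fails.

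The main obstacle is therefore supplying the missing time regularity of $v_\infty$, given that only a $C^{1,\bar\a}$ (not $C^{2,\bar\a}$) estimate is available even for this homogeneous limit problem. The decisive structural observation is that $\bar F$ is autonomous in $t$: each time translate $\tau_h v_\infty(\cdot,t):=v_\infty(\cdot,t+h)$ is another solution, so the difference $d^h:=\tau_h v_\infty-v_\infty$ satisfies a uniformly elliptic inequality in non-divergence form with \emph{zero} right-hand side. Parabolic Krylov--Safonov therefore applies to $d^h$ without forcing, and, combined with the a priori bound $\|d^h\|_\infty\le C\,h^{(1+\bar\a)/2}$ coming from the $C^{1,\bar\a}$ spatial estimate, its $\bar\a$-Hölder decay across parabolic cylinders of radius $h^{1/2}$ yields a second-difference bound of order $h^{(1+2\bar\a)/2}$ for $v_\infty(0,\cdot)$. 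Iterating this Zygmund-type gain (using autonomy again to pass from second-difference control back to first-difference control) promotes the pointwise time regularity of $v_\infty$ at the origin beyond the $C^1$ threshold, producing the affine-in-$t$ approximation needed to contradict the failure assumption. The restriction $\gamma<\bar\a$ enters both in propagating the rescaled forcing through the iteration and in matching the Krylov--Safonov exponent produced by this bootstrap with the target decay exponent for the slope sequence $A_k$; once the improvement lemma closes, assembling the pointwise slopes across base points yields the claimed $u_t\in C^{0,\gamma}(Q_{1/4})$ bound.
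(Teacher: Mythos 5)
Your proposal has a genuine structural gap at its core: the two approximations you track are on incompatible scales, and this breaks the compactness step. Concretely, you want to carry simultaneously a spatial-and-temporal approximation of parabolic order $1+\bar\alpha$ on $Q_{r_k}$ and a time-axis approximation of parabolic order $2+\gamma$ on $\{0\}\times[-r_k^2,0]$. To prove the refined time-axis bound by blow-up you must normalize it, i.e.\ divide the rescaled difference $u(r_kx,r_k^2t)-\ell_k(r_kx)-A_kr_k^2t$ by $r_k^{2+\gamma}$. But the spatial oscillation of that difference over $Q_{r_k}$ is only controlled to $O(r_k^{1+\bar\alpha})$, so after the normalization the rescaled functions have $L^\infty(Q_1)$ size $\sim r_k^{(1+\bar\alpha)-(2+\gamma)}=r_k^{-(1+\gamma-\bar\alpha)}\to\infty$ (since $\bar\alpha<1$). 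There is no uniform bound, hence no Krylov--Safonov compactness and no limit $v_\infty$. Conversely, if you normalize by $r_k^{1+\bar\alpha}$ to tame the spatial growth, the information on $\{0\}\times[-1,0]$ shrinks like $r_k^{1+\gamma-\bar\alpha}\to 0$ and carries no contradiction. You cannot center the blow-up on both estimates at once because the spatial regularity you can hope for ($1+\bar\alpha$) is strictly below the time regularity you are targeting ($2+\gamma$); you even observe in passing that this is the whole novelty (no $C^{2,\bar\alpha}$ is available), but the proposed scheme does not resolve the resulting tension.

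This is precisely the obstruction the paper circumvents by never working with Taylor-type approximations of $u$. Instead it works directly with the rescaled difference quotients $\frac{\delta_\tau u}{\tau^\beta}$, which are already of the right order (bounded, roughly ``the discrete time derivative'') and satisfy the extremal inequalities with forcing $\frac{\delta_\tau f}{\tau^\beta}$. Lemma \ref{thm:improvement} proves a diminish of oscillation for this quantity \emph{uniformly in $\tau$}, and the seemingly technical requirement of an a priori $C^{0,\e}$ bound on $\frac{\delta_\tau u}{\tau^\beta}$ is exactly what lets one push $\tau$ away from zero (via Corollary \ref{lem:appendix3}) so that $\delta/\tau^\beta$ stays controlled. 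The iteration then raises $\beta$ across the threshold $\beta=1$, at which point Lemma \ref{lem:appendix4} produces $u_t$ and its time-H\"older modulus; the spatial H\"older modulus of $u_t$ is obtained at the very end by comparing $\frac{\delta_{r^2}u}{r^2}$ at two nearby points against $u_t$ itself, using \eqref{eq:inth1} and \eqref{eq:inti2}. The difference-quotient formulation is not an arbitrary alternative to your polynomial-approximation scheme; it is what makes the scales match. Your secondary idea --- that for the autonomous homogeneous limit, time translates produce forcing-free solutions whose differences obey Krylov--Safonov and can be bootstrapped Zygmund-style --- is sound in isolation and is essentially the $\delta=0$ instance of Lemma \ref{thm:improvement}, but it never gets invoked because the compactness step it was supposed to feed does not close.
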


Notice that the scaling of this estimate corresponds to the scaling for $u_t$ which formally satisfies an equation with the singular right-hand side given by $f_t$. This already suggests that we should establish a diminish of oscillation for $u_t$ and not necessarily $u$. Keep in mind that a diminish of oscillation for $u$, leading to a similar conclusion would imply that $u$ has second derivatives in space which is known to be false for a general $F$, see for instance the celebrated counterexamples by N. Nadirashvili and S. Vl{\u{a}}du{\c{t}} in \cite{Nadirashvili-2013} and the references therein.

As $u_t$ is not controlled a priori, we establish a diminish of oscillation for the difference quotients $\frac{\d_\t u(x,t)}{\t^\b} := \frac{u(x,t)-u(x,t-\t)}{\t^\b}$ where $\b\in(0,1)$. This allows us to control in each step a higher order difference quotient and, after a finite number of iterations, obtain the desired regularity for $u_t$ as is done in Chapter 5 of \cite{Caffarelli95}.

To show a diminish of oscillation for $\frac{\d_\t u}{\t^\b}$ brings several challenges. First of all the equation for $\frac{\d_\t u}{\t^\b}$ has a right-hand side that might degenerate as $\t$ approaches zero. On the other hand, by using the scaling for $\frac{\d_\t u}{\t^\b}$ we make $u$ grow. The key idea is to assume some small a priori H\"older continuity for the difference quotient which gives a way to control the difference quotients for $\t \in (0,\bar\t)$ by the difference quotients with $\t>\bar\t$. This is rigorously established in the proof of Lemma \ref{thm:improvement}. 

Moreover, we also obtain a $C^{0,\gamma}$ estimate for $\frac{\d_\t u}{\t^\gamma}$ for any $\gamma\in(0,1)$ by only assuming $f$ bounded. This is actually the first step for the proof of our main theorem contained in Section \ref{further}.

Finally we would like to point out that the constant in our estimate degenerates as $\gamma$ approaches zero or $\bar\a$. Whether this estimate can be improved or the existence of counterexamples remains open. Another interesting question is whether the result can be extended for $F$ depending on the time variable. Notice that for $F = F(M,t)$ uniformly elliptic,
\begin{align*}
\frac{d}{dt}F(D^2u,t) = F_M(D^2u,t)D^2u_t + F_t(D^2u,t).
\end{align*}
The first term is the one that can be used in the linearized equation. By uniform ellipticity $F_M(D^2u,t)$ is bounded from above and below away from zero. The second term is a bad one, keep in mind that without assuming $C^{1,1}$ estimates $F_t(D^2u,t)$ might be unbounded. For example, consider
\[
F(D^2u,t) = \sup_\a\inf_\b\trace(A_{\a,\b}(t)D^2u).
\]

\subsection{Applications to fully non-linear, integro-differential, parabolic equations}

The main interest for the authors to study this problem comes from fully non-linear, integro-differential, parabolic equations; let us recall first a singular counterexample concerning a time regularity issue. Given $\s\in(0,2)$, is it known from \cite{Davila12-p} that there exists some Dirichlet data in $(B_1^c\times(-1,0])\cup (\R^n\times\{-1\})$ such that the solution to the fractional heat equation $u_t-\D^{\s/2} u=0$ in $B_1\times(-1,0]$ can not be smoother than Lipschitz continuous in the time variable. This is surprising, as it is well known that for $\s=2$ the solutions are smooth.

H\"older estimates for fully nonlinear and non local parabolic problems were established by G. D\'avila and the first author in \cite{Davila12-p}. Further regularity estimates, as the analogue of the parabolic Evans-Krylov theorem \cite{evans1982classical, krylov1983boundedly, caffarelli2010evans, MR2831115}, seem to require either better time estimates for the solution or strong hypothesis on the data as was considered in \cite{2014arXiv1408.5149L}.

The authors plan to investigate further regularity in time for fully non-linear, integro-differential, parabolic equations. We expect that a H\"older modulus of continuity for the boundary data, just in time, makes the time derivative of the solution H\"older continuous in space and time. For example, by truncating the solution of a homogeneous fractional heat equation we can transfer the Dirichlet data to a right-hand side. Notice that the smoothness of the kernel associated with $\D^{\s/2}$ (outside of the origin) has only a regularizing effect in space but not in time. At this point it is not difficult to see that the time derivative has a modulus of continuity in space and time if the Dirichlet data had a modulus of continuity in time, at least for the heat equation.

Let us take the opportunity at this point to mention that a similar phenomenon was found by J. Serra in \cite{Serra14-2} for the $C^{k,\bar\a'}$ ($k=\lfloor\s+\bar\a\rfloor, \ \bar\a'=k-(\s+\bar\a) \neq 0$) estimates of concave non-local equations of order $\s$ with rough kernels. In this work the Dirichlet data is assumed to be $C^{0,\bar\a}$; moreover, it proves that the $C^{k,\bar\a'}$ estimate is false without this assumption by giving a counterexample. This technique was first introduced by the same author in \cite{Serra14} for the parabolic setting were it concludes that $u \in C^{1,\bar\a}$, which was known only for the case of smooth kernels. Thanks to the scaling, $u_t(x,\cdot) \in C^{(1+\bar\a)/\s} \ss C^{1,\bar\a'}$ if $\s\in(0,1]$. Notice that as $\s\to2$ the estimate leaves a gap between $C^{0,(1+\bar\a)/2} \ss C^{0,1/2}$ and the well known $C^{1,\bar\a}$ regularity in time.

\section{Preliminaries.}\label{preliminaries}

We use the following notation, which is standard for second order parabolic problems. Given $\W\subset\R^n, \ A\subset\R^n\times\R$ and $\a,\t\in(0,1)$,
\begin{align*}
\p_p\1\W \times (t_1,t_2]\2 &:= \1\W \times \{t_1\}\2 \cup \1\p\W\times(t_1,t_2]\2,\\
[u]_{C^\a(A)} &:= \sup_{(x,t),(x',t') \in A} \frac{|u(x,t)-u(x',t')|}{(|x-x'| + |t-t'|^{1/2})^\a},\\
\d_\t u(x,t) &:= u(x,t) - u(x,t-\t).
\end{align*}
We will frequently use the parabolic cylinders $Q_r(x,t)=B_r(x)\times (t-r^2,t)$. The cylinder $Q_r$ is centered on $(0,0)$.


Let $\mathcal S \ss \R^{n\times n}$ be the space of $n$ by $n$ symmetric matrices and $I$ its identity matrix. A continuous function $F:\mathcal S \times \R^n \times \W \to\R$ is said to be uniformly elliptic with respect to $0<\l\leq\L<\8$ if for each $x \in \R^n$,
\begin{equation}
\cM^-(M-N) - \L|p-q| \leq F(M,p,x)-F(N,q,x) \leq \cM^+(M-N) + \L|p-q|, 
\end{equation}
where
\begin{align*}
\cM^+M &:= \sup\{\trace(AM): A \in \mathcal S, \l I\leq A \leq \L I\},\\
\cM^-M &:= \inf\{\trace(AM): A \in \mathcal S, \l I\leq A \leq \L I\}.
\end{align*}

Any constant that depends on $n, \ \l$ and $\L$ is considered universal. The dependence of various values on these quantities will be assumed without being stated explicitly.

Solutions are considered in the viscosity sense as in \cite{Wang92,Caffarelli95}. Another good reference is the lecture notes by C. Imbert and  L. Silvestre, \cite{Silvestre-Imbert-2013}. This notion is sufficiently weak to allow for existence of continuous solutions to the Dirichlet problem by Perron's method.

For smooth functions $u$ and $v$ satisfying,
\begin{align*}
\label{eq:H}
u_t - F(D^2u,Du,x) = f(x,t) \qquad\text{and}\qquad v_t - F(D^2v,Dv,x) = g(x,t),
\end{align*}
the uniform ellipticity of $F$ implies for $w:=u-v$ the following inequalities,
\[
w_t - \cM^+(D^2w) - \L|Dw| \leq f(x,t) - g(x,t) \leq w_t - \cM^-(D^2w) + \L|Dw|.
\]
However, the question of whether two viscosity solutions imply the same inequalities in the viscosity sense is a delicate one. A sufficient condition is that $F$ satisfies the Lipschitz estimate
\begin{align}
\label{eq:H}\tag{H}
 |F(M,p,x)-F(M,p,y)|\leq C |x-y|(1 + |M|+|p|).
\end{align}
See \cite{Jensen89,Crandall92} for a rather comprehensive discussion and many references. It is important to note that any regularity of $F$ needed to ensure this uniform ellipticity identity in the viscosity sense is used only qualitatively, and there is no dependence on it in our estimates. 

The following basic interior regularity estimate is a consequence of the Krylov-Safonov Harnack inequality.

\begin{theorem}[Krylov-Safonov]\label{thm:KS}
There exists a universal exponent $\bar\a\in (0,1)$ and constant $C$ such that for $u$ satisfying in $Q_1$
\begin{align}\label{eq:basic}
u_t - \cM^+ u - \L|Du| &\leq |f| \quad \text{ and } \quad u_t - \cM^- u + \L|Du| \geq -|f|,
\end{align}
in the viscosity sense, then
\begin{align*}
\|u\|_{C^{\bar\a}\1 Q_{1/2}\2} \leq C\1\sup_{Q_1}|u| +\|f\|_{L^{n+1}(Q_1)}\2.
\end{align*}
\end{theorem}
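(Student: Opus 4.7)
The plan is to follow the classical Krylov-Safonov scheme adapted to the parabolic setting. First, I would reduce the role of $f$ to a controlled additive error via a parabolic Alexandrov-Bakelman-Pucci estimate: for a nonnegative supersolution $v$ of the Pucci inequality $v_t - \cM^- v + \L|Dv| \geq -|f|$ vanishing on the parabolic boundary of a cylinder, one has $\sup v^- \leq C\|f\|_{L^{n+1}}$, proved by examining the contact set of $v$ with its monotone parabolic envelope and applying a Jacobian-type inequality there (the parabolic ABP of Tso and Krylov, cf.\ \cite{Wang92, Caffarelli95}). With this in hand, one can subtract an appropriate barrier and treat the right-hand side as small in all subsequent arguments.

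The heart of the proof is the point-to-measure lemma: if $u \geq 0$ is a supersolution in $Q_1$ with $\|f\|_{L^{n+1}(Q_1)} \leq \delta_0$ small and $\inf u \leq 1$ on a suitably placed interior subcylinder, then $|\{u \leq M\} \cap Q_1| \geq \mu$ for universal constants $M,\mu > 0$. This is established by comparing $u$ against an explicit smooth barrier which is radial in space, monotone in time, and a strict subsolution of the Pucci inequality outside a small inner region, and then invoking the ABP estimate. One then propagates this lemma into the weak $L^\epsilon$ estimate $|\{u > s\} \cap Q_{1/2}| \leq C s^{-\epsilon}$. This propagation is the main obstacle: because parabolic cylinders lack symmetry in time, the usual Calder\'on-Zygmund dyadic decomposition must be replaced by the ``stacked cubes'' forward-in-time covering of Krylov and Safonov, and its careful execution is the technical core of the parabolic theory.

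Finally, combining the weak $L^\epsilon$ bound for supersolutions with the corresponding pointwise upper bound for subsolutions of $u_t - \cM^+ u - \L|Du| \leq |f|$, obtained by iterating the point-to-measure lemma on sublevel sets together with standard scaling, yields the parabolic Harnack inequality. Applied to $u - \inf_{Q_r} u$ and $\sup_{Q_r} u - u$ on a slightly shifted subcylinder, Harnack produces the diminish of oscillation $\operatorname{osc}_{Q_{r/2}} u \leq (1-\theta) \operatorname{osc}_{Q_r} u + C r^{\bar\beta}\|f\|_{L^{n+1}(Q_r)}$ with universal $\theta \in (0,1)$ and a scaling gain exponent $\bar\beta > 0$ coming from the $L^{n+1}$-norm. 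Iterating this inequality on dyadic parabolic cylinders, and using the invariance of the Pucci inequalities under $(x,t) \mapsto (rx, r^2 t)$, then yields the stated $C^{\bar\alpha}$ bound with $\bar\alpha = \min(\log_2 (1/(1-\theta)), \bar\beta)$.
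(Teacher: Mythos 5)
The paper does not prove Theorem~\ref{thm:KS}: it is stated as a preliminary (``a consequence of the Krylov--Safonov Harnack inequality'') and attributed to the standard literature, namely \cite{Wang92} and \cite{Caffarelli95}, with viscosity-solution background also in \cite{Silvestre-Imbert-2013}. So there is no proof in the paper to compare against; you have supplied an argument where the authors chose to cite.

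That said, your outline does match the standard route (Tso--Krylov parabolic ABP, point-to-measure lemma via an explicit barrier, Krylov--Safonov ``stacked cubes'' covering to get the weak $L^\epsilon$ decay, Harnack, diminish of oscillation, iteration on dyadic cylinders), so it is consistent with the references the paper invokes. Two small points if you were to flesh it out. First, in the ABP paragraph you describe $v$ as a \emph{nonnegative} supersolution and then bound $\sup v^-$; as stated $v^-\equiv 0$, so you mean either a supersolution without a sign condition vanishing on $\partial_p Q$ (bound $\sup v^-$), or dually a subsolution of $u_t-\cM^+u-\L|Du|\le f$ with $u\le 0$ on $\partial_p Q$ (bound $\sup u^+$ by $\|f^+\|_{L^{n+1}}$ on the contact set). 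Second, the closing formula $\bar\a=\min\bigl(\log_2(1/(1-\theta)),\bar\beta\bigr)$ should be read as saying $\bar\a$ is any exponent strictly below that minimum; and the exponent $\bar\beta$ is not free but is pinned down by how $\|f\|_{L^{n+1}}$ scales under $(x,t)\mapsto(rx,r^2t)$, which is exactly why the $L^{n+1}$ norm (and not a lower Lebesgue exponent) appears in the estimate. Neither of these is a gap in the strategy, just imprecision in the sketch.
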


From now on we fix $\bar\a \in (0,1)$ to be the exponent in Theorem \ref{thm:KS}.



\section{Bounded Right-Hand Side}\label{further}

The goal of this section is to show that under the assumption that $f$ is bounded, we have that $u$ is in every H\"older space with exponent $\gamma\in(0,1)$. The argument will proceed iteratively, with each step proving that the difference quotients $\frac{\d_\t u}{\t^\b}$ are bounded for progressively higher $\b$. The crucial step is to control $\frac{\d_\t u}{\t^{\b+\a/2}}$ given that $\frac{\d_\t u}{\t^\b}$ is already controlled ($\a,\b\in(0,1)$ and $\b+\a/2\in(0,1)$), similar to what is done in Chapter 5 from \cite{Caffarelli95} in order to prove $C^{1,\bar\a}$ estimates. It turns out to be useful to control quantities like $\|\frac{\d_\t u}{\t^\b}\|_{C^{0,\e}}$ for some small $\e$, as this allows to have control on the difference quotients for $\t$ approaching zero by using Corollary \ref{lem:appendix3} in the appendix. This entire section is devoted to proving the following theorem:

\begin{theorem}\label{thm:improv_reg_1}
Let $F$ be uniformly elliptic satisfying hypothesis \eqref{eq:H} with $F(0,0,x) = 0$ and $u$ satisfies,
\begin{align*}
u_t - F(D^2u,Du,x) = f(x,t) \text{ in the viscosity sense in } Q_2
\end{align*}
Given $\gamma\in(0,1)$ there exists $\e\in(0,1-\gamma)$ and $C>0$ depending on $(1-\gamma)$ such that,
\[
\sup_{\t\in(0,1/4)} \left\|\frac{\d_\t u}{\t^\gamma}\right\|_{C^\e(Q_{1/2})} \leq C\1\osc_{Q_2}u+\sup_{Q_2}|f|\2.
\]
\end{theorem}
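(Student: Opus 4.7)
The plan is to iterate the one-step improvement encoded in Lemma \ref{thm:improvement}, starting from a base case provided by the Krylov--Safonov theorem.

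For the base case, note that uniform ellipticity of $F$ together with $F(0,0,x)=0$ forces $u$ to satisfy the Pucci inequalities \eqref{eq:basic} with right-hand side $|f|$, so Theorem \ref{thm:KS} yields
\[
\|u\|_{C^{\bar\a}(Q_{3/4})} \leq C\1\osc_{Q_2} u + \sup_{Q_2}|f|\2.
\]
For any $\b_0 \in (0, \bar\a/2)$ and $\t \in (0, 1/4)$, two applications of the triangle inequality give the bound
\[
|\d_\t u(x,t) - \d_\t u(x',t')| \leq C\min\bigl(\t^{\bar\a/2},\ (|x-x'| + |t-t'|^{1/2})^{\bar\a}\bigr),
\]
and a direct case analysis (split according to whether $|x-x'| + |t-t'|^{1/2} \leq \t^{1/2}$) converts this into
\[
\sup_{\t \in (0,1/4)} \left\|\frac{\d_\t u}{\t^{\b_0}}\right\|_{C^{\bar\a - 2\b_0}(Q_{1/2})} \leq C\1\osc_{Q_2} u + \sup_{Q_2}|f|\2.
\]
This is the conclusion of the theorem at the small exponent $\b_0$.

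To reach a general $\gamma \in (0,1)$, I would construct finite sequences $\b_0 < \b_1 < \cdots < \b_N$ and $\e_0 > \e_1 > \cdots > \e_N > 0$, starting from the base case with $\e_0 = \bar\a - 2\b_0$, by repeatedly applying Lemma \ref{thm:improvement}. Each step takes a $C^{\e_k}$ bound on $\d_\t u/\t^{\b_k}$ into a $C^{\e_{k+1}}$ bound on $\d_\t u/\t^{\b_{k+1}}$ with $\b_{k+1} = \b_k + \a_k/2$. Provided each step size $\a_k/2$ is uniformly bounded below (in terms of $1-\gamma$, so that $\b_k + \a_k/2$ stays comfortably away from $1$), a finite number of iterations $N = N(1-\gamma)$ suffices to reach $\b_N \geq \gamma$; a last application of Lemma \ref{thm:improvement} with the exact remaining step (or a simple interpolation among the quotients) then produces the estimate at exponent exactly $\gamma$ with some $\e \in (0, 1-\gamma)$. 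The blow-up of $C$ as $\gamma \uparrow 1$ reflects both the number of iterations and the degradation of $\e_k$ at each step.

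The genuine work sits inside Lemma \ref{thm:improvement}, which I take as a given tool: the equation for $\d_\t u/\t^\b$ carries a right-hand side $\d_\t f/\t^\b$ that need not stay bounded as $\t \downarrow 0$, while the parabolic rescaling that normalizes $\d_\t u/\t^\b$ forces $u$ itself to grow. As anticipated in the introduction, the auxiliary Hölder exponent $\e$ is precisely the mechanism used to control these defects: via Corollary \ref{lem:appendix3}, the a priori $C^\e$-bound on the quotient reduces its small-$\t$ behavior to its large-$\t$ behavior, where the right-hand side and the scaling are harmless. Once Lemma \ref{thm:improvement} is in hand, the present theorem is the bookkeeping of $(\b_k, \e_k)$ described above.
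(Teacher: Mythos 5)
Your base case matches the paper exactly: a two-case analysis ($\t<r^2$ versus $\t\geq r^2$) with the Krylov--Safonov estimate gives the bound at a small starting exponent $\b_0$. But the iterative step you describe is not what Lemma \ref{thm:improvement} provides, and the difference is where most of the remaining work in the paper lives.

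Lemma \ref{thm:improvement} is a \emph{one-scale diminish of oscillation at a fixed $\b$}: under the normalizations, it gives
\[
\sup_{\t}\left[\frac{\d_\t u}{\t^\b}\right]_{C^{0,\e}(Q_\m)}\leq \m^\a,
\]
still with the same exponent $\b$ in the denominator. It does \emph{not} output a $C^{\e_{k+1}}$ bound on $\d_\t u/\t^{\b+\a/2}$. Passing from the former to the latter is the content of Corollaries \ref{cor:iteration} and \ref{cor:improvement_dif_quot}, and neither step is bookkeeping. Corollary \ref{cor:iteration} iterates the one-scale decay over all dyadic scales; this requires rescaling $v(x,t):=u(\m^i x,\m^{2i}t)/\m^{2i(\b+\a/2+\e/2)}$, which blows up the size of $u$, so the argument relies essentially on Lemma \ref{thm:improvement} being independent of $\osc\,u$, and one must check the hypothesis on $\d_\t f$ survives this rescaling (here the restriction $\b+\a/2+\e/2<1$ enters). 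Corollary \ref{cor:improvement_dif_quot} then converts the resulting geometric decay of $[\d_\t u/\t^\b]_{C^{0,\e}}$ over shrinking cylinders into a genuine bound on the higher-order quotient $\d_\t u/\t^{\b+\a/2}$; this uses a purely one-dimensional interpolation in the time variable (the proof of Lemma 5.2 in \cite{Caffarelli95} applied to $t\mapsto u(x,t)$) followed by another two-case analysis in $\t$ versus $r$. That step is also where the $\e$ exponent degrades from $\e$ to $\bar\a\e/4$, so your sequence $\e_k$ must contract geometrically (the paper takes $\e_k=\e_0(\bar\a/4)^k$); you cannot just say ``some $\e_k$'' since its size governs the smallness parameters $\m,\d$ at the next stage. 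So your proposal has the right skeleton but collapses two nontrivial corollaries into a single invocation of Lemma \ref{thm:improvement}; you would need to supply the all-scales iteration and the time-variable interpolation explicitly.
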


The key step is stablished in the following Lemma. Notice that for $\e=0$ the statement is a diminish of oscillation leading to a $C^{0,\a}$ estimate for the difference quotient.

\begin{lemma}\label{thm:improvement}
Let $F$ be uniformly elliptic satisfying hypothesis \eqref{eq:H} and $u$ satisfies,
\begin{align*}
&u_t-F(D^2 u,Du,x)=f(x,t) \text{ in the viscosity sense in } Q_2,
\end{align*}
Let $\b\in(0,1), \a\in(0,\bar\a)$ and $\e\in(0,\min(1-\b,\bar\a-\a))$ where $\bar\a$ is the exponent from Krylov-Safonov estimates. Then there exists constants $\m,\d\in(0,1)$ depending on $\a$ and $\e$, such that,
\[
\sup_{\t\in(0,1)} \left[\frac{\d_{\t} u}{\t^\b}\right]_{C^{0,\e}(Q_1)}\leq 1 \qquad \text{ and } \qquad \sup_{\substack{\t\in(0,1)\\(x,t)\ss Q_1}}|\d_\t f(x,t)| \leq \d,
\]
imply,
\[
\sup_{\t\in(0,1)}\left[\frac{\d_{\t} u}{\t^\b}\right]_{C^{0,\e}(Q_\m)}\leq \m^\a.
\]
\end{lemma}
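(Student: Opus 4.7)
The plan is to argue by compactness and contradiction. Suppose the conclusion fails. Then for some $\m>0$ (to be fixed below) and every choice of $\d^k\to 0$ there exist $F^k,u^k,f^k$ satisfying the hypotheses with $\d=\d^k$, together with $\t^k\in(0,1)$ for which $[v^k]_{C^{0,\e}(Q_\m)}>\m^\a$, where $v^k:=\d_{\t^k} u^k/(\t^k)^\b$. Differencing the equation at times $t$ and $t-\t^k$ and invoking uniform ellipticity, $v^k$ satisfies in $Q_1$ the Pucci inequalities
\begin{align*}
(v^k)_t - \cM^+(D^2 v^k) - \L|Dv^k| &\leq (\t^k)^{-\b}\d_{\t^k} f^k,\\
(v^k)_t - \cM^-(D^2 v^k) + \L|Dv^k| &\geq (\t^k)^{-\b}\d_{\t^k} f^k,
\end{align*}
in the viscosity sense, with right-hand side bounded by $\d^k/(\t^k)^\b$. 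After subtracting the constant $v^k(0,0)$ (which preserves the Pucci inequalities), the seminorm hypothesis gives $\|v^k\|_{L^\infty(Q_1)}\le 2^\e$.

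Now split according to the behavior of $\t^k$. If $\t^k\ge \t_0>0$ along a subsequence, the right-hand side tends to zero in $L^\infty$, and Theorem~\ref{thm:KS} yields uniform $C^{\bar\a}(Q_{1/2})$ bounds for $v^k$. By Arzelà--Ascoli we extract $v^k\to v^\infty$ in $C^{\bar\a'}$ for any $\bar\a'\in(\e,\bar\a)$, with $v^\infty$ satisfying the homogeneous Pucci inequalities. A second application of Theorem~\ref{thm:KS} gives $[v^\infty]_{C^{\bar\a}(Q_{1/2})}\le C$. Since $\bar\a-\e>\a$, Hölder interpolation on $Q_\m\subset Q_{1/2}$ produces $[v^\infty]_{C^{0,\e}(Q_\m)}\le C\m^{\bar\a-\e}\le \tfrac12\m^\a$ once $\m$ is chosen small depending only on $\bar\a-\e-\a>0$. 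Because $C^{\bar\a'}$ convergence with $\bar\a'>\e$ implies $C^{0,\e}$ seminorm convergence, we obtain $[v^k]_{C^{0,\e}(Q_\m)}\to[v^\infty]_{C^{0,\e}(Q_\m)}\le\tfrac12\m^\a$, contradicting the strict lower bound $\m^\a$.

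The remaining case $\t^k\to 0$ is the main obstacle, since the right-hand side of the Pucci equation for $v^k$ need not stay bounded. Here Corollary~\ref{lem:appendix3} enters: exploiting the uniform-in-$\t$ hypothesis $[\d_\t u^k/\t^\b]_{C^{0,\e}(Q_1)}\le 1$, that corollary estimates $\d_{\t^k} u^k/(\t^k)^\b$ on a slightly smaller cylinder in terms of the analogous difference quotients at scales $\t\ge\bar\t$ for some fixed $\bar\t>0$. This reduces the small-$\t^k$ regime to a range of $\t$ bounded away from zero, where the previous argument applies, and the same contradiction is reached. Selecting $\m$ small (depending on $\bar\a-\e-\a$) and then $\d$ small (depending on $\m$) yields the constants asserted in the lemma.
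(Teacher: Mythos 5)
Your proposal and the paper's proof share the core philosophy — use Corollary~\ref{lem:appendix3} to rule out the degenerate regime where $\tau$ is small, then apply Krylov--Safonov at the remaining scales — but you substitute a compactness/limiting argument for the paper's single-scale rescaling and direct estimate. The compactness route is fine for the branch where $\tau^k$ stays away from zero, and your interpolation $[v^\infty]_{C^{0,\e}(Q_\m)}\le C\m^{\bar\a-\e}\le\tfrac12\m^\a$ correctly exploits the hypothesis $\e<\bar\a-\a$.

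The gap is in the branch $\tau^k\to 0$, and it is not a matter of filling in details: your reduction ``to a range of $\tau$ bounded away from zero'' does not actually go through without an additional normalization. Corollary~\ref{lem:appendix3} applied to $z(a)=u^k(x,t+a)-u^k(y,s+a)$ yields a bound of the form
\[
|v^k(x,t)-v^k(y,s)| \le 2\sup_{\tau'\ge\bar\t}\left|\frac{\d_{\tau'}u^k(x,t)}{(\tau')^\b}-\frac{\d_{\tau'}u^k(y,s)}{(\tau')^\b}\right| + C\bar\t^{\e/2},
\]
in which the error $C\bar\t^{\e/2}$ is \emph{absolute}: it is not multiplied by the parabolic distance $d:=|x-y|+|t-s|^{1/2}$. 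To contradict $[v^k]_{C^{0,\e}(Q_\m)}>\m^\a$ you must divide by $d^\e$ and take a supremum over pairs in $Q_\m$, and the error term then becomes $C\bar\t^{\e/2}/d^\e$, which is unbounded when the $C^{0,\e}$ seminorm is nearly attained by points at vanishingly small separation. No choice of fixed $\bar\t>0$ rescues this. The paper's proof resolves exactly this difficulty by first extracting a violating sub-cylinder $Q_r(x_0,t_0)\ss Q_\m$ with $\sup_\t\osc_{Q_r(x_0,t_0)}\d_\t u/\t^\b>\m^\a r^\e$ and then rescaling by $\kappa=r/\m$ so that, after rescaling, the oscillation occurs on the \emph{fixed} cylinder $Q_\m$ (so $d\sim\m$); only then does the error $C\bar\t^{\e/2}$ stay small relative to the oscillation lower bound $\m^{\a+\e}$, and the contradiction via Krylov--Safonov closes. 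Without that rescaling, the small-$\tau$ branch of your argument does not reach a contradiction, and you would need to supply some equivalent normalization of the violating scale.
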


\begin{proof}
The value $\mu\in(0,1)$ will remain fixed for the duration of the proof; it will be specified later explicitly. Assume by contradiction that for $\d \in(0,1)$, there exists $F$ and $u$ such that,
\begin{align*}
&u_t-F(D^2 u,Du,x)=f(x,t) \text{ in the viscosity sense in } Q_2,\\
&\sup_{\t\in (0,1)} \left[\frac{\d_\t u}{\t^\b}\right]_{C^{0,\e}(Q_1)}\leq 1, \qquad \text{ and } \qquad  \sup_{\substack{\t\in(0,1)\\(x,t)\in Q_1}}|\d_\t f(x,t)| \leq \d,
\end{align*}
however, there exists a cylinder $Q_{r}(x_0,t_0) \ss Q_\m$ for which,
\[
 \sup_{\t\in (0,1)}\osc_{Q_{r}(x_0,t_0)}\frac{\d_\t u}{\t^\b}>\mu^\a r^\e.
\]

Consider the following rescaling for $\kappa := r/\mu$,
\[
w(x,t) := \kappa^{-(2\b+\e)}u\1\kappa x+x_0,\kappa^2 t+t_0\2.
\]
It satisfies,
\[
w_t(x,t)-\tilde{F}(D^2 w,Dw,x)=\tilde{f}(x,t) \text{ in the viscosity sense in } Q_2
\]
where,
\begin{align}
\nonumber
&\tilde{F}(M,p,x):=\kappa^{2-2\b-\e}F\1\kappa^{-(2-2\b-\e)}M,\kappa^{-(1-2\b-\e)}p,\kappa x+x_0\2,\\
\nonumber
&\tilde{f}(x,t):= \kappa^{2-2\b-\e}f\1\kappa x+x_0,\kappa^2 t+t_0\2.
\end{align}
Notice that the hypothesis for $f$ implies the following for $\tilde f$ provided that $\b\in(0,1-\e/2)$,
\begin{align}
\label{eq:rescaledrhs}
&\sup_{\substack{\t\in(0,\kappa^{-2})\\(x,t)\in Q_1}}|\d_\t \tilde{f}(x,t)| \leq \d.
\end{align}
The hypotheses for the difference quotients of $u$ imply that
\begin{align}
\label{eq:rescaledupbd}
&\sup_{\t\in(0,\kappa^{-2})}\left[\frac{\d_\t w}{\t^\b}\right]_{C^{0,\e}(Q_1)}\leq \sup_{\t\in(0,1)}\left[\frac{\d_\t u}{\t^\b}\right]_{C^{0,\e}(Q_1)}\leq 1,\\
\label{eq:contradiction}
&\sup_{\t\in(0,\kappa^{-2})}\osc_{Q_{\mu}}\frac{\d_\t w}{\t^\b}=\kappa^{-\e}\sup_{\t\in(0,1)}\osc_{Q_{r}(x_0,t_0)}\frac{\d_\t u}{\t^\b}>\mu^{\a+\e}.
\end{align}

The next step consists on showing that a hypothesis similar to \eqref{eq:contradiction} holds taking the supremum with respect to $\tau$ away from zero. Namely $\t\in(\bar\t,\kappa^{-2})$ for some $\bar\t\in(0,\k^{-2})$ depending on $\m$ and $\e$. Indeed, define for $(x,t),(y,s)\in Q_\m$,
\[
z(a)=w\1x,t+\kappa^{-2}a\2-w\1y,s+\kappa^{-2}a\2,
\]
applying Corollary \ref{lem:appendix3} to $z$:
\begin{align*}
&\sup_{\t\in(\bar\t,\kappa^{-2})} \left|\frac{\d_\t w(x,t)}{\t^{\b}}-\frac{\d_\t w(y,s)}{\t^{\b}}\right|,\\
\geq &\frac{1}{2}\sup_{\t\in(0,\kappa^{-2})}\left|\frac{\d_\t w(x,t)}{\t^{\b}}-\frac{\d_\t w(y,s)}{\t^{\b}}\right| - C \bar\t^{\e/2}\sup_{\t\in(0,\kappa^{-2})}\left[\frac{\d_\t w}{\t^\b} \right]_{C^{0,\e}(Q_1)}.
\end{align*}
The second term on the right-hand side is controlled from \eqref{eq:rescaledupbd}. After taking the supremum in $(x,t),(y,s)\in Q_\m$ and using \eqref{eq:contradiction}, this gives
\begin{align}
\label{eq:contradiction2}
\sup_{\t\in(\bar\t,\kappa^{-2})} \osc_{Q_\m}\frac{\d_\t w}{\t^\b} \geq \frac{\m^{\a+\e}}{2}-C\bar\t^{\e/2} \geq \frac{\m^{\a+\e}}{4},
\end{align}
provided that $\bar\t^{\e/2}$ is sufficiently small with respect to $\m^{\a+\e}$.

Let $\t\in(\bar\t,\kappa^{-2})$ and
\[
v(x,t) = \frac{\d_\t w}{\t^\b}(x,t) - \frac{\d_\t w}{\t^\b}(0,0)
\]
By time translation invariance and the hypothesis \eqref{eq:H} we get that $v$ satisfies two viscosity inequalities in $Q_1$,
\begin{align*}
 v_t - \cM^+(D^2 v) -\L|Dv| \leq \frac{\d_\t \tilde{f}}{\t^\b} \qquad \text{ and } \qquad v_t - \cM^-(D^2 v) + \L|Dv| \geq \frac{\d_\t \tilde{f}}{\t^\b}.
\end{align*}
In order to apply the estimate in the Krylov-Safanov Theorem \ref{thm:KS} we need to control the two terms on the right hand side. Using \eqref{eq:rescaledupbd} we get that $\osc_{Q_1} v \leq 1$. The right-hand side gets also controlled by one provided we take $\d\in(0, \bar\t^\b)$,
\[
\sup_{Q_1}\left|\frac{\d_\t \tilde{f}}{\t^\b}\right| \leq \frac{\d}{\t^\b} \leq 1.
\]
Finally, by the H\"older estimate in Theorem \ref{thm:KS}, and fixing now $\m$ sufficiently small in terms of $(\bar\a-(\a+\e))$, we get the following contradiction to \eqref{eq:contradiction2},
\[
 \osc_{Q_\m} v \leq C\m^{\bar\a}\leq \frac{\m^{\a+\e}}{8}.
\]
\end{proof}

Notice that the previous lemma is independent on the size of the oscillation of the solution $u$. This allows us to prove the following corollary by considering an appropriated rescaling for $\frac{\d_\t u}{\t^\b}$. The fact that the oscillation of $u$ increases by this rescaling is actually harmless.

\begin{corollary}\label{cor:iteration}
Let $F$ be uniformly elliptic satisfying hypothesis \eqref{eq:H} and $u$ satisfies,
\begin{align*}
&u_t-F(D^2 u,Du,x)=f(x,t) \text{ in the viscosity sense in } Q_2,
\end{align*}
Let $\b\in(0,1), \a\in(0,\min(1-\b,\bar\a))$ and $\e\in(0,\min(1-(\b+\a/2),\bar\a-\a))$ where $\bar\a$ is the exponent from Krylov-Safonov estimates. Then there exists constants $\m,\d\in(0,1)$ depending on $\a$ and $\e$, such that,
\[
\sup_{\t\in(0,1)} \left[\frac{\d_{\t} u}{\t^\b}\right]_{C^{0,\e}(Q_1)}\leq 1 \qquad \text{ and } \qquad \sup_{\substack{\t\in(0,1)\\(x,t)\ss Q_1}}|\d_\t f(x,t)| \leq \d,
\]
imply for every $i\in\N$,
\[
\sup_{\t\in(0,\m^{2i})}\left[\frac{\d_{\t} u}{\t^\b}\right]_{C^{0,\e}\1 Q_{\m^i}\2}\leq \m^{\a i}.
\]
\end{corollary}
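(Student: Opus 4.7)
The plan is to induct on $i$. The base case $i=0$ is precisely the hypothesis of the corollary, so the work is in the inductive step.

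Assume the conclusion holds at step $i$. Set $\kappa := \mu^i$ and introduce the rescaling
\[
 v(x,t) := \kappa^{-(2\b+\e+\a)}\, u(\kappa x,\kappa^2 t),
\]
so that $v$ satisfies a rescaled equation $v_t-\tilde F(D^2v,Dv,x)=\tilde f(x,t)$ in $Q_2$ with
\[
 \tilde F(M,p,x) := \kappa^{2-2\b-\e-\a} F(\kappa^{-(2-2\b-\e-\a)}M,\kappa^{-(1-2\b-\e-\a)}p,\kappa x),
\qquad \tilde f(x,t) := \kappa^{2-2\b-\e-\a} f(\kappa x,\kappa^2 t).
\]
A short check shows $\tilde F$ is uniformly elliptic with the same constants and still satisfies \eqref{eq:H}, since the extra factors of $\kappa\le 1$ that appear are harmless. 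The key computation is the scaling of difference quotients: from $\d_\t v(x,t)=\kappa^{-(2\b+\e+\a)}\d_{\kappa^2\t}u(\kappa x,\kappa^2 t)$ one gets
\[
 \left[\frac{\d_\t v}{\t^\b}\right]_{C^{0,\e}(Q_1)}
 = \kappa^{-\a}\,\left[\frac{\d_{\kappa^2\t} u}{(\kappa^2\t)^\b}\right]_{C^{0,\e}(Q_\kappa)}.
\]

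The inductive hypothesis at step $i$ bounds the right-hand side by $\kappa^{-\a}\cdot\m^{\a i}=1$ for all $\t\in(0,1)$ (after the change of variable $\t'=\kappa^2\t\in(0,\m^{2i})$). For the right-hand side, $|\d_\t\tilde f|\le \kappa^{2-2\b-\e-\a}\d$ on $Q_1$, and the exponent $2-2\b-\e-\a$ is nonnegative thanks to the assumption $\e<1-(\b+\a/2)$ (in fact a fortiori $\b+\e/2+\a/2<1$), so $|\d_\t\tilde f|\le\d$. Thus $v$ satisfies the hypotheses of Lemma \ref{thm:improvement}, which therefore yields
\[
 \sup_{\t\in(0,1)}\left[\frac{\d_\t v}{\t^\b}\right]_{C^{0,\e}(Q_\m)}\le \m^\a.
\]

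Unwinding the rescaling with the same identity as above produces
\[
 \sup_{\t'\in(0,\m^{2i})}\left[\frac{\d_{\t'} u}{(\t')^\b}\right]_{C^{0,\e}(Q_{\m^{i+1}})}\le \kappa^{\a}\m^\a = \m^{\a(i+1)},
\]
which implies the claim at step $i+1$ after restricting the supremum to $\t'\in(0,\m^{2(i+1)})\subset(0,\m^{2i})$. The main obstacle is purely bookkeeping: choosing the multiplicative constant $\kappa^{-(2\b+\e+\a)}$ so that the seminorm of the rescaled difference quotient is exactly $1$, while verifying that the combined exponent $2-2\b-\e-\a$ appearing in the right-hand side is nonnegative so that the smallness of $\tilde f$ is preserved throughout the iteration; this is precisely where the strengthened hypothesis $\e<1-(\b+\a/2)$ of the corollary (as opposed to the lemma) is used.
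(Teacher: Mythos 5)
Your proof is correct and follows essentially the same path as the paper's: the paper defines $v(x,t):=\mu^{-2i(\beta+\alpha/2+\varepsilon/2)}u(\mu^ix,\mu^{2i}t)$, which is exactly your $\kappa^{-(2\beta+\varepsilon+\alpha)}u(\kappa x,\kappa^2 t)$ with $\kappa=\mu^i$, and then checks the same two hypotheses of Lemma \ref{thm:improvement} via the identical scaling identity for the seminorm and the sign of the exponent $2-2\beta-\varepsilon-\alpha$. The only cosmetic difference is that you phrase the induction (base case, unwinding) slightly more explicitly, which the paper leaves implicit.
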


\begin{proof}
Assume for some $i\in\N$ the inductive hypothesis,
\[
\sup_{\t\in(0,\m^{2i})}\left[\frac{\d_{\t} u}{\t^\b}\right]_{C^{0,\e}\1 Q_{\m^i}\2}\leq \m^{\a i}.
\]

Let
\begin{align*}
v(x,t) := \frac{u(\m^i x,\m^{2i}t)}{\m^{2i(\b+\a/2+\e/2)}}.
\end{align*}
such that,
\begin{align*}
&v_t-\tilde F(D^2 v,Dv,x)=\tilde f(x,t) \text{ in the viscosity sense in } Q_2,
\end{align*}
where,
\begin{align*}
\tilde F(M,p,x) &:= \m^{2i(1-\b-\a/2-\e/2)}F(\m^{-2i(1-\b-\a/2-\e/2)} M,\m^{-2i(1/2-\b-\a/2-\e/2)}p,\m^i x),\\
\tilde f(x,t) &:= \m^{2i(1-\b-\a/2-\e/2)}f(\m^i x,\m^{2i} t).
\end{align*}
Given that $\b+\a/2+\e/2<1$ we obtain that,
\begin{align}\label{eq:6}
\sup_{\substack{\t\in(0,1)\\(x,t)\ss Q_1}}|\d_\t \tilde f(x,t)| \leq \d.
\end{align}
Moreover, the inductive hypothesis tells us that,
\[
\sup_{\t\in(0,1)} \left[\frac{\d_{\t} v}{\t^\b}\right]_{C^{0,\e}(Q_1)} = \m^{-\a i}\sup_{\t\in(0,\m^{2i})} \left[\frac{\d_{\t} u}{\t^\b}\right]_{C^{0,\e}\1 Q_{\m^i}\2} \leq 1.
\]
By applying Lemma \ref{thm:improvement} to $v$ we now obtain the desired inductive step and the proof the corollary.
\end{proof}

The next corollary establishes an estimate over a higher order difference quotient by sacrificing a little bit of the $\e$ H\"older exponent.

\begin{corollary}\label{cor:improvement_dif_quot}
Let $F$ be uniformly elliptic satisfying hypothesis \eqref{eq:H} with $F(0,0,x) = 0$ and $u$ satisfies,
\begin{align*}
&u_t-F(D^2 u,Du,x)=f(x,t) \text{ in the viscosity sense in } Q_2,
\end{align*}
Let $\b\in(0,1), \a\in(0,\min(2-2\b,\bar\a))$ and $\e\in(0,\min(1-(\b+\a/2),\bar\a-\a))$ where $\bar\a$ is the exponent from Krylov-Safonov estimates. Then there exists constants $\m,\d\in(0,1)$ depending on $\a$ and $\e$, such that,
\[
\osc_{Q_2} u + \sup_{\t\in(0,1)} \left[\frac{\d_{\t} u}{\t^\b}\right]_{C^{0,\e}(Q_1)}\leq 1 \qquad \text{ and } \qquad \sup_{Q_2}|f| \leq \d,
\]
imply,
\[
\sup_{\t\in(0,1/4)}\left[\frac{\d_{\t} u}{\t^{\b+\a/2}}\right]_{C^{0,\bar\a\e/4}\1 Q_{1/2}\2}\leq C.
\]
\end{corollary}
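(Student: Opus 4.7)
The idea is to combine Corollary~\ref{cor:iteration} with the Krylov--Safonov estimate from Theorem~\ref{thm:KS} applied directly to $\delta_\tau u$, which inherits from the equation of $u$ the two Pucci inequalities with right-hand side $\delta_\tau f$, bounded by $2\delta$. The gain of $\alpha/2$ in the time exponent should emerge from the oscillation decay at the natural scale $r\sim\tau^{1/2}$, while the loss from $\epsilon$ down to $\bar\alpha\epsilon/4$ in the H\"older exponent will come from interpolating these two estimates.

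First I would translate the hypotheses to any base point $(x_*,t_*)\in Q_{1/2}$ and apply Corollary~\ref{cor:iteration}, obtaining for every $i\in\mathbb{N}$ and $\tau\in(0,\mu^{2i})$ the estimate
\[
\left[\frac{\delta_\tau u}{\tau^\beta}\right]_{C^{0,\epsilon}(Q_{\mu^i}(x_*,t_*))}\leq C\mu^{\alpha i},
\]
which in turn yields the oscillation decay $\osc_{Q_r(x_*,t_*)}\delta_\tau u \leq C\tau^\beta r^{\alpha+\epsilon}$ for $r\in[\tau^{1/2},1/4]$. For two points at parabolic distance $\rho\leq\tau^{1/2}$ this directly gives $|\delta_\tau u(x_0,t_0)-\delta_\tau u(x_1,t_1)|\leq C\tau^{\beta+\alpha/2}\rho^\epsilon$; dividing by $\tau^{\beta+\alpha/2}$ and using $\bar\alpha\epsilon/4<\epsilon$ together with $\rho\leq 1$ produces the desired H\"older bound in this regime.

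For the remaining regime $\rho>\tau^{1/2}$ I would apply Theorem~\ref{thm:KS} to the centered function $\delta_\tau u - c$ on a cylinder $Q_r(x_*,t_*)$, using that $\delta_\tau u$ satisfies the two Pucci inequalities with right-hand side bounded by $2\delta$. Combined with the oscillation bound from the iteration and with the constraint $\alpha+\epsilon<\bar\alpha$ (equivalent to $\epsilon<\bar\alpha-\alpha$), this gives
\[
[\delta_\tau u]_{C^{0,\bar\alpha}(Q_{r/2}(x_*,t_*))}\leq C\tau^\beta r^{\alpha+\epsilon-\bar\alpha}.
\]
Interpolating this high-exponent Krylov--Safonov bound against the low-exponent iteration bound, with the scale $r$ chosen as an appropriate function of $\rho$ and $\tau$, should produce the $C^{0,\bar\alpha\epsilon/4}$ seminorm bound uniformly in $\tau$.

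The main obstacle is precisely this last interpolation: in the regime $\rho\gg\tau^{1/2}$ the direct iteration bound gives $|\delta_\tau u(x_0,t_0)-\delta_\tau u(x_1,t_1)|/\tau^{\beta+\alpha/2}\leq C\rho^{\alpha+\epsilon}/\tau^{\alpha/2}$, which blows up as $\tau\to 0$, and it is only the additional spatial H\"older regularity of $\delta_\tau u$ provided by Krylov--Safonov (the exponent $\bar\alpha$ in place of $\alpha+\epsilon$) that compensates for this degeneracy. The factor $1/4$ in the final H\"older exponent reflects the weights arising when balancing the $C^{0,\epsilon}$ and $C^{0,\bar\alpha}$ bounds across different scales.
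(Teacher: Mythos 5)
Your proposal handles the regime $\rho\leq\tau^{1/2}$ correctly: the iteration of Corollary~\ref{cor:iteration} at the scale $r\sim\tau^{1/2}$ does give $|\delta_\tau u(x_0,t_0)-\delta_\tau u(x_1,t_1)|\leq C\tau^{\beta+\alpha/2}\rho^\epsilon$, and this suffices there. The gap is precisely where you flag it, in the regime $\rho>\tau^{1/2}$, and I do not see how the proposed interpolation closes it. The difficulty is not just technical: both the iteration bound (exponent $\alpha+\epsilon$) and the Krylov--Safonov bound (exponent $\bar\alpha$) are \emph{oscillation} estimates for $\delta_\tau u$ over a cylinder, scaled by the factor $\tau^\beta$, so neither gives a bound on the \emph{pointwise} size of $\delta_\tau u$ as $\tau\to0$ with $\rho$ of order one. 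Concretely, writing the KS conclusion on $Q_{r/2}(x_*,t_*)$ as $[\delta_\tau u]_{C^{0,\bar\alpha}}\leq C\tau^\beta r^{\alpha+\epsilon-\bar\alpha}+C\delta r^{2-\bar\alpha}$ (the second term comes from the right-hand side $\delta_\tau f$, which is $O(\delta)$ uniformly in $\tau$), you must divide by $\tau^{\beta+\alpha/2}$, and the $\delta$-term then contributes $C\delta r^{2-\bar\alpha}/\tau^{\beta+\alpha/2}$, which blows up as $\tau\to0$. Choosing $r$ inside the allowed window $[2\rho,1/2]$ cannot fix this, because the obstruction is the fixed-size $\delta$ in the numerator, not the choice of scale.

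The missing ingredient is a genuinely one-dimensional interpolation. The paper's proof freezes $x$, sets $v(t)=u(x,t)$, observes that the iteration bound applied at scale $r=\tau^{1/2}$ controls the \emph{second} difference $\delta^2_\tau v$ by $C\tau^{\beta+\alpha/2+\epsilon/2}$, and then invokes Lemma~5.2 of~\cite{Caffarelli95} (using also $\osc v\leq 1$ and $\beta+\alpha/2+\epsilon/2<1$) to upgrade this to a bound on the \emph{first} difference, $\sup_t|\delta_\tau v(t)|\leq C\tau^{\beta+\alpha/2+\epsilon/2}$, uniformly in $\tau\in(0,1/4)$. This pointwise control, and not an interpolation between two oscillation estimates, is what makes the small-$\tau$ regime work: once one has $|\delta_\tau u|/\tau^{\beta+\alpha/2}\leq C\tau^{\epsilon/2}$ on all of $Q_{1/2}$, the case $\tau<r^{\bar\alpha/2}$ is immediate by comparing $\tau^{\epsilon/2}$ with $r^{\bar\alpha\epsilon/4}$, and only the complementary case $\tau\geq r^{\bar\alpha/2}$ is handled by the Krylov--Safonov oscillation bound as you propose. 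So you have found the correct idea for one half of the dichotomy but not for the other; the step that converts a second-difference bound into a first-difference bound by exploiting the boundedness of $u$ is essential and cannot be replaced by the Krylov--Safonov/iteration interpolation you sketched.
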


\begin{proof}
By a standard covering argument applied to Corollary \ref{cor:iteration} we know that there exists $C>0$ such that,
\begin{align}\label{eq:2}
\sup_{\substack{Q_r(x,t)\ss Q_{1/2}\\\t\in(0,r^2)}}\osc_{Q_r(x,t)}\frac{\d_\t u}{r^{\a+\e}\t^\b}\leq C.
\end{align}
Our goal is to bound instead,
\begin{align*}
\sup_{\substack{Q_r(x,t)\ss Q_{1/2}\\\t\in(0,1/4)}}\osc_{Q_r(x,t)}\frac{\d_\t u}{r^{\bar\a\e/4}\t^{\b+\a/2}}.
\end{align*}

Let $x\in B_{1/2}$ and $v(t)=u(x,t)$. By hypothesis,
\[
\osc_{(-1/4,0]}v \leq \osc_{Q_2} u \leq 1.
\]
On the other hand, using \eqref{eq:2} with $\t=r^2$,
\begin{align*}
\left|\frac{\d^2_\t v(t)}{\t^{\b+\a/2+\e/2}}\right| \leq \osc_{Q_{\t^{1/2}}(x,t)} \frac{\d_\t u}{\t^{\b+\a/2+\e/2}} \leq C.
\end{align*}
We can apply now to $v$ the proof of Lemma 5.2 from \cite{Caffarelli95} using that $\b+\a/2+\e/2<1$ in order to obtain the following estimate independent of $x \in B_{1/2}$,
\begin{align*}
\sup_{\t\in(0,1/4)}\osc_{(-1/4,0]} \frac{\d_\t v}{\t^{\b+\a/2+\e/2}} \leq C.
\end{align*}
In particular, by the triangular inequality,
\begin{align*}
\sup_{\t\in(0,1/4)}\osc_{Q_{1/2}} \frac{\d_\t u}{\t^{\b+\a/2+\e/2}} \leq C.
\end{align*}

We now fix $Q_r(x,t)\ss Q_{1/2}$ and consider two cases. If $\t\in(0,r^{\bar\a/2})$ then from the previous estimate,
\begin{align*}
\osc_{Q_r(x,t)}\frac{\d_\t u}{r^{\bar\a\e/4}\t^{\b+\a/2}} \leq C\frac{\t^{\e/2}}{r^{\bar\a\e/4}}\leq C.
\end{align*}
On the other hand, if $\t\in[r^{\bar\a/2},1/4)$ then we use the Krylov-Safanov estimate \ref{thm:KS} and the fact that $2>\b+\a/2+\e/2$,
\begin{align*}
\osc_{Q_r(x,t)}\frac{\d_\t u}{r^{\bar\a\e/4}\t^{\b+\a/2}} \leq C\frac{r^{\bar\a(1-\e/4)}}{\t^{\b+\a/2}} \leq C.
\end{align*}
\end{proof}

By iterating Corollary \ref{cor:improvement_dif_quot} we get to control higher order difference quotients of the solution. This is the same approach found to address the H\"older estimates for the derivatives of a solution that satisfies a translation invariant equation, see Chapter 5 from \cite{Caffarelli95}. The consequence is the proof of Theorem \ref{thm:improv_reg_1}.

\begin{proof}[Proof of Theorem \ref{thm:improv_reg_1}]
Let us assume without loss of generality that for $\d\in(0,1)$ sufficiently small,
\begin{align*}
\osc_{Q_2} u \leq 1 \qquad\text{and}\qquad \sup_{Q_2}|f| \leq \d.
\end{align*}
Consider $\b_0 \in(0,3\bar\a/8)$, $\a:=\bar\a/2$ and $\b_k := \b_0 + k\a/2$. Our goal is to prove that as long $\b_k<1$ there exists some $\e_k\in(0,1)$ such that,
\[
\sup_{\t\in (0,2^{-2k})} \left[\frac{\d_\t u}{\t^{\b_k}}\right]_{C^{0,\e_k}\1Q_{2^{-k}}\2} \leq C(k).
\]
Then the result follows by a standard covering argument both for the domain of the equation and the interval of H\"older exponents. Notice also that $\b_k<1$ implies $k<4/\bar\a$ such that any dependence on $k$ is actually universal.

The first step is to establish the following bound for $\e\in(0,\bar\a/8)$ by using the Krylov-Safanov Theorem \ref{thm:KS},
\begin{align*}
\sup_{\t\in(0,1/4)}\left[\frac{\d_\t u}{\t^{\b_0}}\right]_{C^{\e}(Q_{1/2})} \leq C.
\end{align*}
Same as in the proof of Corollary \ref{cor:improvement_dif_quot} we consider two cases. If $\t\in(0,r^2)$ then we bound the oscillation in the time variable in terms of $\t$ by the Krylov-Safanov estimate and then use the triangular inequality to obtain,
\begin{align*}
\sup_{Q_r(x,t)\ss Q_{1/2}} \osc_{Q_r(x,t)} \frac{\d_\t u}{r^{\e} \t^{\b_0}} \leq C\frac{\t^{\bar\a/2-\b_0}}{r^{\e}} \leq C.
\end{align*}
If $\t\in[r^2,1/4)$ then we use instead that $\osc_{Q_r(x,t)} \d_\t u \leq \osc_{Q_r(x,t)} u + \osc_{Q_r(x,t-\t)} u$, for which each term gets controlled in terms of $r$, once again using the Krylov-Safanov estimate,
\begin{align*}
\sup_{Q_r(x,t)\ss Q_{1/2}} \osc_{Q_r(x,t)} \frac{\d_\t u}{r^{\e} \t^{\b_0}} \leq C\frac{r^{\bar\a-\e}}{\t^{\b_0}} \leq C.
\end{align*}

At this point we plan to iterate Corollary \ref{cor:improvement_dif_quot} $k$ times for some $k$ such that $\b_k = \b_0+k\a/2=\b_0+k\bar\a/4<1$. Let $\e_0 := \min(1-\b_k,\bar\a/16)$ and $\e_k := \e_0(\bar\a/4)^k$. Then, as long as $\b_k+\e_{k-1}/2<1$ we get that,
\[
\sup_{\t\in (0,2^{-2k})} \left[\frac{\d_\t u}{\t^{\b_k}}\right]_{C^{0,\e_k}\1Q_{2^{-k}}\2} \leq C(k).
\]
This establishes the desired estimate with constants that depend on $(1-\b_k)$ besides universal quantities.
\end{proof}

\section{H\"older Right-Hand Side}

In this section we assume $f$ to be H\"older continuous and establish a similar modulus of continuity for $u_t$. The main idea consists into applying Lemma \ref{thm:improvement} for $\b$ sufficiently close to one followed by some modifications to the proofs of Corollaries \ref{cor:iteration} and \ref{cor:improvement_dif_quot}.

\begin{theorem}\label{thm:improv_reg_2}
Let $F$ be uniformly elliptic satisfying hypothesis \eqref{eq:H} with $F(0,0,x) = 0$ and $u$ satisfies,
\begin{align*}
u_t - F(D^2u,Du,x) = f(x,t) \text{ in the viscosity sense in } Q_1
\end{align*}
Assume that for all $x\in B_2$, $f(x,\cdot) \in C^{0,\gamma/2}[-4,0]$ for some $\gamma \in (0, \bar \a)$ where $\bar \a$ is the exponent from the Krylov-Safonov theorem. Then $u_t$ exists pointwise, and for some constant $C>0$ depending on $\min(\gamma,(\bar\a-\gamma))$,
\[
 \|u_t\|_{C^{0,\gamma}(Q_{1/4})} \leq C\1\sup_{Q_2}|u| + \sup_{x\in B_2}\|f(x,\cdot)\|_{C^{0,\gamma/2}(-4,0]}\2.
\]
\end{theorem}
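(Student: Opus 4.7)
The plan is to iterate an adaptation of the machinery of Section \ref{further} that exploits the H\"older-in-time regularity of $f$, and then extract $u_t$ from a second-order difference quotient. By rescaling and subtracting a constant we may assume $\sup_{Q_2}|u|\leq 1$ and $\sup_{x\in B_2}\|f(x,\cdot)\|_{C^{0,\gamma/2}[-4,0]}\leq\delta$ for $\delta>0$ to be chosen small; in particular $|\delta_\tau f(x,t)|\leq \delta\,\tau^{\gamma/2}$.

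First I would apply Theorem \ref{thm:improv_reg_1} at some $\beta_0 \in (0,1)$ close to one (to be chosen in terms of $\gamma$ and $\bar\alpha$), producing an $\epsilon_0>0$ and
\[
\sup_{\tau\in(0,1/4)}\left[\frac{\delta_\tau u}{\tau^{\beta_0}}\right]_{C^{0,\epsilon_0}(Q_{1/2})}\leq C.
\]
The core of the argument is then the observation that, in the proofs of Lemma \ref{thm:improvement} and Corollaries \ref{cor:iteration} and \ref{cor:improvement_dif_quot}, the sole role of the constraint $\beta+\alpha/2+\epsilon/2<1$ is to guarantee that the rescaled right-hand side $\tilde f$ continues to satisfy $|\delta_\tau\tilde f|\leq\delta$. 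Using $|\delta_\tau f|\leq\delta\,\tau^{\gamma/2}$ instead, the rescaling in Corollary \ref{cor:iteration} now gives
\[
|\delta_\tau\tilde f(x,t)|\leq \mu^{2i(1+\gamma/2-\beta-\alpha/2-\epsilon/2)}\,\delta\,\tau^{\gamma/2}\leq \delta\,\tau^{\gamma/2},
\]
as long as $\beta+\alpha/2+\epsilon/2\leq 1+\gamma/2$. Rereading the three proofs with this modification produces modified statements in which $<1$ is replaced by $<1+\gamma/2$, with all other hypotheses and conclusions unchanged.

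Next I would choose $\alpha \in (\gamma,\bar\alpha)$ and $\epsilon > 0$ small with $\alpha+\epsilon = \gamma + 2(1-\beta^*)$ for some $\beta^* \in (\beta_0,1)$ saturating $\beta^* + \alpha/2 + \epsilon/2 = 1 + \gamma/2$, and iterate the modified Corollary \ref{cor:improvement_dif_quot} a finite number of times to reach
\[
\osc_{Q_r(x,t)}\frac{\delta_\tau u}{\tau^{\beta^*}}\leq C\,r^{\alpha+\epsilon},\quad Q_r(x,t)\subset Q_{1/2},\ \tau\in(0,r^2).
\]
Setting $r=\tau^{1/2}$ and applying the identity $\delta^2_\tau u(x,t)=\delta_\tau u(x,t)-\delta_\tau u(x,t-\tau)$ yields the second-order difference bound $|\delta^2_\tau u(x,t)|\leq C\,\tau^{1+\gamma/2}$.

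Finally, $|\delta_\tau u/\tau - \delta_{2\tau}u/(2\tau)| = |\delta^2_\tau u/(2\tau)| \leq C\,\tau^{\gamma/2}$ shows that $\{\delta_\tau u(x,t)/\tau\}_\tau$ is Cauchy as $\tau\to 0^+$, defining $u_t(x,t)$ pointwise with $|u_t(x,t)-\delta_\tau u(x,t)/\tau|\leq C\,\tau^{\gamma/2}$. The H\"older modulus in time, $|u_t(x,t)-u_t(x,s)|\leq C|t-s|^{\gamma/2}$, follows from taking $\tau = t-s$ in this approximation and using $\delta_\tau u(x,t) - \delta_\tau u(x,s) = \delta^2_\tau u(x,t)$ together with the triangle inequality. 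The H\"older modulus in space, $|u_t(x,t)-u_t(y,t)|\leq C|x-y|^\gamma$, comes from the same approximation combined with the spatial oscillation estimate above, upon balancing $\tau = |x-y|^2$ and invoking the saturated relation $\alpha+\epsilon = \gamma + 2(1-\beta^*)$. The main obstacle is the careful audit of the proofs of Lemma \ref{thm:improvement} and its corollaries under the relaxed constraint, together with the bookkeeping required to choose $\alpha, \epsilon, \beta^*$ on the boundary of the admissible regime so that the iteration reaches the exact exponent $\beta^* + \alpha/2 + \epsilon/2 = 1 + \gamma/2$ within a finite (universally bounded) number of steps.
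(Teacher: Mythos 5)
Your proposal follows the same overall strategy as the paper's proof: normalize, apply Theorem \ref{thm:improv_reg_1} to reach a base exponent $\beta$ near one, then observe that the only place the constraint $\beta+\alpha/2+\epsilon/2<1$ enters the iterative machinery of Section \ref{further} is in bounding the rescaled right-hand side $\tilde f$, and that with the H\"older hypothesis this constraint relaxes to $\beta+\alpha/2+\epsilon/2<1+\gamma/2$. The resulting oscillation bound $\osc_{Q_r}(\delta_\tau u/\tau^{\beta^*})\leq Cr^{\alpha+\epsilon}$ with $\beta^*+(\alpha+\epsilon)/2 = 1+\gamma/2$ is exactly \eqref{eq:inth1}, and your saturated relation is what makes the final spatial H\"older exponent come out to $\gamma$. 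The one genuine (if minor) divergence from the paper is in extracting $u_t$ and its time H\"older regularity at the end: the paper applies the interpolation Lemma \ref{lem:appendix4} to conclude directly that $u_t$ exists with $\|u_t(x,\cdot)\|_{C^{0,\gamma/2}}\leq C$, whereas you derive the bound $|\delta^2_\tau u|\leq C\tau^{1+\gamma/2}$ and then argue that $\delta_\tau u/\tau$ is Cauchy along dyadic $\tau$. This is a legitimate alternative, but as written it leaves a small gap: a dyadic Cauchy estimate alone does not immediately control $|\delta_\tau u/\tau - \delta_\sigma u/\sigma|$ between non-comparable $\tau,\sigma$, nor does it directly give the time-H\"older modulus of $u_t$. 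The cleanest patch is precisely to invoke the standard fact (Lemma 5.6 in \cite{Caffarelli95}, or equivalently Lemma \ref{lem:appendix4}) that a second-difference bound $|\delta^2_\tau u|\leq C\tau^{1+\gamma/2}$ implies $u(x,\cdot)\in C^{1,\gamma/2}$ — which reduces your final step to the paper's. One further small imprecision: you speak of iterating ``the modified Corollary \ref{cor:improvement_dif_quot}'' to reach the oscillation bound; it is the modified Corollary \ref{cor:iteration} (plus a covering argument) that produces the oscillation bound, while Corollary \ref{cor:improvement_dif_quot} controls the higher-order difference quotient. These are cosmetic issues; the substance of the argument matches the paper.
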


\begin{proof}
Let us assume without loss of generality that for some $\d\in(0,1)$ sufficiently small,
\[
\sup_{Q_2}|u|\leq 1 \qquad\text{ and }\qquad \sup_{x\in B_2}\|f(x,\cdot)\|_{C^{0,\gamma/2}(-4,0]}\leq \d.
\]
By Theorem \ref{thm:improv_reg_1} we know that for $\b := 1-(\bar\a-\gamma)/4$ there exists some $\e\in(0,1)$ such that,
\[
\sup_{\t\in(0,1/4)} \left\|\frac{\d_\t u}{\t^\b}\right\|_{C^{0,\e}(Q_{1/2})} \leq C.
\]
Then we apply Lemma \ref{thm:improvement} to get the diminishment for $\a = (\bar\a+\gamma)/2$ and some $\m\in(0,1/2)$,
\[
\sup_{\t\in(0,1/4)} \left[\frac{\d_\t u}{\t^\b}\right]_{C^{0,\e}(Q_\m)} \leq C\m^\a.
\]

Then we would like to apply Corollary \ref{cor:iteration}, however we have $\b+\a/2 = 1+\gamma/4$ which is not smaller than one as required in the statement of such corollary. The necessity of such hypothesis appears in the rescaling we considered, so let us recall the setup,
\begin{align*}
v(x,t) &:= \frac{u(\m^i x,\m^{2i}t)}{\m^{2i(\b+\a/2+\e/2)}},\\
\tilde F(M,p,x) &:= \m^{2i(1-\b-\a/2-\e/2)}F(\m^{-2i(1-\b-\a/2-\e/2)} M,\m^{-2i(1/2-\b-\a/2-\e/2)} p,\m^i x),\\
\tilde f(x,t) &:= \m^{2i(1-\b-\a/2-\e/2)}f(\m^i x,\m^{2i} t).
\end{align*}
At this point we can use the H\"older hypothesis for $f$ to obtain,
\[
\sup_{\substack{\t\in(0,1/4)\\(x,t)\in Q_{1/2}}}|\d_\t \tilde f(x,t)| \leq \d\mu^{2i(1-\b-\a/2-\e/2)}\m^{i\gamma} = \d\m^{i(\gamma/2-\e)} \leq 1,
\]
provided that $\e\in(0,\gamma/2)$. In conclusion, the same argument from the proof of Corollary \ref{cor:iteration} applies in order to obtain, after a standard covering argument that,
\begin{align}\label{eq:inth1}
\sup_{\substack{Q_r(x,t)\ss Q_{1/4}\\\t\in(0,r^2)}}\osc_{Q_r(x,t)}\frac{\d_\t u}{r^{\a+\e}\t^{\b}} \leq C.
\end{align}

Now we use Lemma \ref{lem:appendix4} to get the bounds
\begin{equation}\label{eq:inti2}
 \sup_{Q_{1/4}}|u_t| +  \sup_{\substack{(x,t)\in Q_{1/4}\\\t\in(0,1/4)}} \left| \frac{\d_\t u_t(x,t)}{\t^{\gamma/4}}   \right| \leq C.
\end{equation}
At this moment all we have to show is that given $(y,s)\in Q_r(x,t)\ss Q_{1/4}$,
\begin{align}\label{eq:7}
 \left| u_t(x,t) - u_t(y,s)\right| \leq Cr^{\gamma/2}.
\end{align}

Let $\t = r^2$ such that from \eqref{eq:inth1} we obtain,
\[
 \left| \frac{\d_{r^2}u(x,t)}{r^2} - \frac{\d_{r^2}u(y,s)}{r^2}\right| \leq r^{\gamma/2}.
\]
On the other hand, using \eqref{eq:inti2},
\[
\left|\frac{\d_{r^{2}}u}{r^2}-u_t\right|(y,s) \leq \frac{1}{r^2}\int_{-r^2}^0|u_t(y,s+a)-u_t(y,s)|da\leq Cr^{\gamma/2}
\]
Finally the desired estimate results from the triangular inequality by adding and subtracting $\1\frac{\d_{r^2}u(x,t)}{r^2} - \frac{\d_{r^2}u(y,s)}{r^2}\2$ inside the absolute value in \eqref{eq:7}.
\end{proof}

\section{Appendix}

In this appendix we establish a few interpolation results about H\"older spaces. The first lemma can be understood as a maximum principle.

\begin{lemma}
Let $\a,\b\in(0,1)$. Then for any $u \in C[-1,0]$,
\begin{align*}
&u(-1)=u(0)=0, \qquad \sup_{\t\in(0,1)}\left[\frac{\d_{\t}u}{\t^{\b}}\right]_{C^\a[-1+\t,0]} \leq 1\qquad \Rightarrow \qquad \osc_{[-1,0]}u \leq 2.
 \end{align*}
\end{lemma}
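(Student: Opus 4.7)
The plan is to exploit the H\"older bound on $\d_\t u/\t^\b$ at a carefully chosen pair of evaluation points so that the boundary values $u(-1)=u(0)=0$ produce a cancellation, yielding a ``doubling'' relation that controls $u$ at one scale by $u$ at twice the scale. A single application then closes an inequality on $\sup|u|$.

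Concretely, for $t\in[-1/2,0]$ I would set $\t:=-t\in(0,1/2]$ and apply the H\"older estimate for $\d_\t u/\t^\b$ at the two points $0$ and $t$, both of which lie in $[-1+\t,0]$. Since $u(0)=0$, one has $\d_\t u(0)=-u(-\t)=-u(t)$, while $\d_\t u(t)=u(t)-u(2t)$; the bound thus collapses to
\[
|u(2t)-2u(t)|\leq \t^\b|t|^\a = |t|^{\a+\b}.
\]
A symmetric computation at the other endpoint, using $u(-1)=0$ and evaluating at the pair $(-1+\t,-1+2\t)$ with $\t:=1+t\in(0,1/2]$, gives for $t\in[-1,-1/2]$,
\[
|u(2t+1)-2u(t)|\leq (1+t)^{\a+\b}.
\]

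Setting $M:=\sup_{[-1,0]}|u|$, in both inequalities the ``doubled'' point ($2t$ or $2t+1$) still lies in $[-1,0]$, while the scale factor ($|t|$ or $1+t$) is at most $1/2$. Solving for $u(t)$ gives $|u(t)|\leq M/2+\tfrac{1}{2}(1/2)^{\a+\b}$ for every $t\in[-1,0]$. Taking the supremum on the left forces $M\leq M/2 + 2^{-(1+\a+\b)}$, i.e.\ $M\leq 2^{-(\a+\b)}\leq 1$. Since $u$ vanishes at both endpoints, $\osc_{[-1,0]} u\leq 2M\leq 2$.

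The one delicate point is spotting the right pair of evaluation points. A more naive attempt, such as bounding $|\d_\t u(t)|$ by $|\d_\t u(0)|$ plus the H\"older term, produces a recursion involving $|u(-\t)|$ that does not close. Taking $\t=-t$ makes $u(-\t)=u(t)$, so the boundary condition $u(0)=0$ together with the telescoping assembles into a pure second difference of $u$ at two scales, with no stray first-order terms; the analogous trick at $-1$ covers the remaining half of the interval.
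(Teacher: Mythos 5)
Your argument is correct, and it takes a genuinely different route from the paper's. Both proofs hinge on the same elementary observation: the hypothesis on $\big[\delta_\tau u/\tau^\beta\big]_{C^\alpha}$ controls second differences $\delta_\tau^2 u$ like $\tau^{\alpha+\beta}$, and the boundary conditions let one ``double toward the nearest endpoint.'' But where the paper constructs a barrier $\varphi(t)=2\min\big(|t|^{(\alpha+\beta)/4},|t+1|^{(\alpha+\beta)/4}\big)$ and derives a contradiction from a second difference at a hypothetical interior touching point of $u-\varphi$, you instead extract the exact doubling relation $|u(2t)-2u(t)|\le|t|^{\alpha+\beta}$ (resp.\ its mirror at $-1$) and run an absorption on $M:=\sup|u|$, closing to $M\le 2^{-(\alpha+\beta)}\le 1$. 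Your version is shorter, avoids the verification that the barrier actually dominates $u$ at the endpoints and has the right concavity, and makes the finiteness of $M$ (from $u\in C[-1,0]$) do the work that the maximum principle does in the paper; the paper's approach in return yields the pointwise profile $u\le\varphi$, which is a bit more information than the single number $M\le 1$, though both over-deliver on the stated claim $\osc\,u\le 2$. One small remark: you correctly restrict to $t\in[-1/2,0]$ (so that $t\ge -1+\tau$ with $\tau=-t$) and to $t\in[-1,-1/2]$ with $\tau=1+t$ on the other half, which is exactly what makes the doubled point stay in $[-1,0]$ and the Hölder seminorm applicable on $[-1+\tau,0]$; this is the same ``split at the midpoint'' the paper uses to choose between its two barrier branches.
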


\begin{proof}
Let,
\[
\varphi(t) = 2\max\1|t|^{(\a+\b)/4},|t+1|^{(\a+\b)/4}\2.
\]
We want to show that $u \leq \varphi$ in $[-1,0]$ and therefore $u \leq 2$. Assume that there exists $\t\in(0,1/2]$ such that $-\t$ realizes the positive maximum of $(u-\varphi)$ in $(-1,0)$. Then we obtain the following contradiction,
\[
-2\1 2-2^{(\a+\b)/4}\2 \t^{(\a+\b)/4} \geq \d_\t^2\varphi(0) \geq \d_\t^2 u(0) \geq -\t^{\a+\b}.
\]
A similar contradiction happens if $\t\in(1/2,1)$ by considering the second order differences at $-1$.
\end{proof}

The proof of Lemma 5.2 in \cite{Caffarelli95} shows that if $\a+\b<1$ then there exits some constant $C>0$ depending on $1-(\a+\b)$ such that the following estimate holds,
\[
\sup_{\t\in(0,1)}\left|\frac{\d_\t u(0)}{\t^{\a+\b}}\right| \leq C\1\osc_{[-1,0]}u + \sup_{\t\in(0,1)}\left[\frac{\d_{\t}u}{\t^{\b}}\right]_{C^\a[-1+\t,0]}\2.
\]

By applying this result followed by the maximum principle to,
\[
\bar u(s) := \frac{u(\bar\t s) + s u(-\bar\t) - (s+1)u(0)}{\bar\t^{\a+\b}\sup_{\t\in(0,\bar\t)}\left[\frac{\d_\t u}{\t^\b}\right]_{C^\a[\t-\bar\t,0]}}
\]
we get the following corollary.

\begin{corollary}\label{lem:appendix3}
Let $\a,\b\in(0,1)$ such that $\a+\b<1$. There exists a constant $C>0$ depending on $1-(\a+\b)$ such that for any $u \in C[-1,0]$ and $\bar\t\in(0,1)$,
\[
\sup_{\t\in(0,\bar\t)}\left|\frac{\d_\t u(0)}{\t^{\b+\a}}\right| \leq \bar\t \left|\frac{\d_{\bar\t} u(0)}{\bar\t^{\b+\a}}\right| + C\sup_{\t\in(0,1)}\left[\frac{\d_\t u}{\t^\b}\right]_{C^\a[-1+\t,0]}.
\]
In particular,
\[
\sup_{\t\in(\bar\t,1)}\left|\frac{\d_{\t} u(0)}{\t^{\b}}\right| \geq \frac{1}{2}\sup_{\t\in(0,1)}\left|\frac{\d_\t u(0)}{\t^{\b}}\right| - C\bar\t^\a\sup_{\t\in(0,1)}\left[\frac{\d_\t u}{\t^\b}\right]_{C^\a[-1+\t,0]}.
\]
\end{corollary}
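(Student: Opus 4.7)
The plan is to follow the route indicated by the author: apply the maximum-principle lemma and the Caffarelli--Cabr\'e-type estimate cited immediately above to the normalized auxiliary function $\bar u$ given in the statement. Set
\[
D := \bar\t^{\a+\b}\sup_{\t\in(0,\bar\t)}\left[\frac{\d_\t u}{\t^\b}\right]_{C^\a[\t-\bar\t,0]}.
\]
The function $\bar u$ is engineered so that $\bar u(0)=\bar u(-1)=0$, the normalized semi-norm $\sup_\t[\d_\t\bar u/\t^\b]_{C^\a[-1+\t,0]}$ equals exactly $1$, and the quantity $D\,\d_\t\bar u(0)$ encodes the discrepancy $\d_{\bar\t\t}u(0)-\t\,\d_{\bar\t}u(0)$ on which the whole argument pivots.

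First I would verify $\bar u(0)=\bar u(-1)=0$ by direct substitution (the correction term is the affine interpolant through these two values). Next I would check the semi-norm normalization: since the correction term $s\,u(-\bar\t)-(s+1)u(0)$ is affine in $s$, its first difference $\d_\t$ (in $s$) is constant in $s$ and so contributes nothing to the $C^\a$ semi-norm. Therefore the relevant semi-norm reduces to that of $u(\bar\t\cdot)/D$, and the change of variables $\tilde s=\bar\t s$, $\tilde\t=\bar\t\t$ rescales it to $\frac{\bar\t^{\a+\b}}{D}[\d_{\tilde\t}u/\tilde\t^\b]_{C^\a[\tilde\t-\bar\t,0]}$. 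Taking the supremum over $\t\in(0,1)$, equivalently $\tilde\t\in(0,\bar\t)$, against the definition of $D$ gives exactly $1$. The maximum-principle lemma then yields $\osc_{[-1,0]}\bar u\leq 2$.

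With these two inputs, the cited Caffarelli--Cabr\'e bound applied to $\bar u$ produces $\sup_{\t\in(0,1)}|\d_\t\bar u(0)|/\t^{\a+\b}\leq C$ with $C$ depending only on $1-(\a+\b)$. Unwinding through
\[
\d_\t\bar u(0)=-\bar u(-\t)=\frac{\d_{\bar\t\t}u(0)-\t\,\d_{\bar\t}u(0)}{D},
\]
substituting $\t'=\bar\t\t\in(0,\bar\t)$, dividing by $(\t')^{\a+\b}$, and applying the triangle inequality delivers the first displayed estimate, using that $(\t'/\bar\t)^{1-\a-\b}\leq 1$ on $(0,\bar\t)$ and that $D/\bar\t^{\a+\b}$ is dominated by the supremum on the right. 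The ``in particular'' consequence follows by multiplying the first inequality by $\t^\a$, using $\t^\a\leq\bar\t^\a$ on $(0,\bar\t)$ together with the identity $\bar\t^\a|\d_{\bar\t}u(0)/\bar\t^{\a+\b}|=|\d_{\bar\t}u(0)/\bar\t^\b|$, and finally bounding the latter by $\sup_{\t\in(\bar\t,1)}|\d_\t u(0)/\t^\b|$ via continuity of $u$; the factor $1/2$ is simply safety slack absorbed into the constant. The main obstacle is the clean scaling bookkeeping in the second step, where the essential observation is that the linear correction inside $\bar u$ exists precisely to enforce the endpoint vanishing without disturbing the difference-quotient semi-norm.
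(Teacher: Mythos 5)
Your proof follows exactly the route the paper sketches (the paper's ``proof'' is just the displayed definition of $\bar u$ together with the instruction to apply the Lemma~5.2-type bound and the maximum-principle lemma to it), and the scaling bookkeeping you supply is correct. Two small remarks are worth flagging. First, your derivation produces the first displayed inequality with coefficient $1$ in front of $\left|\d_{\bar\t}u(0)/\bar\t^{\a+\b}\right|$, not $\bar\t$; this is not a defect of your argument but an apparent typo in the paper, since taking $u(t)=t$ (so that the semi-norm on the right vanishes and the left side equals $\bar\t^{1-\a-\b}$) shows the inequality with the factor $\bar\t$ is actually false, whereas the coefficient-$1$ version is exactly what is needed for the ``in particular'' consequence and is what the auxiliary function $\bar u$ genuinely yields. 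Second, your normalization divides by $D := \bar\t^{\a+\b}\sup_{\t\in(0,\bar\t)}[\d_\t u/\t^\b]_{C^\a[\t-\bar\t,0]}$ and you should note the degenerate case $D=0$ separately: there $u$ must be affine on $[-\bar\t,0]$, and the estimate (with coefficient $1$) holds by direct computation, so the division is harmless. With these two points addressed, the proposal is a complete and faithful expansion of the paper's argument.
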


Finally, this last lemma establishes a H\"older estimate for the derivative when $\a+\b>1$.

\begin{lemma}\label{lem:appendix4}
Let $\a,\b\in(0,1)$ such that $\a+\b>1$ and $u:[-1,1]\to\R$ such that,
\[
\sup_{\t\in(0,1)}\left\|\frac{\d_\t u}{\t^\b}\right\|_{C^\a(-1+\t,1)} \leq 1.
\]
Then for some universal constant $C$,
\[
\|u_t\|_{C^{1,\a+\b-1}(-1,1)} \leq C.
\]
\end{lemma}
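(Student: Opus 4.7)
My plan is to extract from the hypothesis a Zygmund-type second-difference bound on $u$, and then apply the classical embedding that converts such a bound into $C^{1,\a+\b-1}$ regularity once $\a+\b>1$.

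The input estimate $|\d^2_\t u(t_0)|\le \t^{\a+\b}$ drops out of the hypothesis by writing $\d^2_\t u(t_0)=\d_\t u(t_0)-\d_\t u(t_0-\t)$ and applying the $C^\a$ seminorm bound on $\d_\t u/\t^\b$ between the two time points. Fix $t_0\in(-1,1)$ and consider the dyadic backward quotients $\psi_k(t_0):=2^k\d_{2^{-k}}u(t_0)$. The telescoping $\d_{2^{-k}}u(t_0)=\d_{2^{-k-1}}u(t_0)+\d_{2^{-k-1}}u(t_0-2^{-k-1})$ yields $\psi_{k+1}(t_0)-\psi_k(t_0)=2^k\d_{2^{-k-1}}^2u(t_0)$, which by the Zygmund bound is at most $C\,2^{-k(\a+\b-1)}$ in absolute value. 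Since $\a+\b>1$ this is summable, so $\{\psi_k(t_0)\}$ is Cauchy; I define $u_t(t_0)$ to be its limit, and telescoping gives $|\psi_k(t_0)-u_t(t_0)|\le C\,2^{-k(\a+\b-1)}$.

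The H\"older continuity of $u_t$ then comes from the usual triangle-inequality trick. For $t_0,t_1\in(-1,1)$ with $h:=|t_0-t_1|$ small, I pick a dyadic $\t$ with $h\le\t\le 2h$ and write
\[
 |u_t(t_0)-u_t(t_1)|\le |u_t(t_0)-\psi_\t(t_0)|+|\psi_\t(t_0)-\psi_\t(t_1)|+|\psi_\t(t_1)-u_t(t_1)|.
\]
The outer terms are each at most $C\t^{\a+\b-1}\le Ch^{\a+\b-1}$, while the middle one reuses the $C^\a$ hypothesis at the fixed scale $\t$ to give $|\psi_\t(t_0)-\psi_\t(t_1)|=\t^{-1}|\d_\t u(t_0)-\d_\t u(t_1)|\le \t^{\b-1}h^\a\le Ch^{\a+\b-1}$, using $\b-1\le 0$ together with $\t\ge h$.

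The main obstacle is identifying the dyadic limit $u_t(t_0)$ with the actual classical derivative of $u$, because the hypothesis only controls $\d_\t u/\t^\b$ and the individual quotients $\psi_\t(t_0)$ need not be Cauchy as $\t\to 0^+$ along non-dyadic sequences. I plan to address this through the averaging identity $\psi_\t(t_0)=2^{-k}\sum_{j=0}^{2^k-1}\psi_{\t/2^k}(t_0-j\t/2^k)$, obtained by telescoping $u(t_0)-u(t_0-\t)$ into $2^k$ pieces at scale $\t/2^k$. Combining it with the uniform approximation $\psi_{\t/2^k}(\cdot)\to u_t(\cdot)$ from the second paragraph and the H\"older continuity of $u_t$ from the third, passing to the limit $k\to\infty$ yields $\psi_\t(t_0)=\t^{-1}\int_{t_0-\t}^{t_0}u_t(r)\,dr$ for every small $\t>0$; continuity of $u_t$ now forces $\psi_\t(t_0)\to u_t(t_0)$ as $\t\to 0^+$, completing the identification with the classical time derivative.
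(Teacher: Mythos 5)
Your proposal is correct, and while it rests on the same arithmetic fact as the paper — the dyadic second-difference bound $|\d_\t^2 u|\le\t^{\a+\b}$ is summable across scales precisely because $\a+\b>1$ — it packages the argument differently. The paper first invokes Lemma~5.6 of Caffarelli--Cabr\'e to get that $u$ is Lipschitz, hence differentiable a.e., and then at each differentiability point $t_0$ runs a proof by contradiction: if $u(t_0+h)-u(t_0)-u_t(t_0)h$ exceeds $Ch^{\a+\b}$, iterating the telescoping inequality $u(h_i)-2u(h_{i+1})\le h_{i+1}^{\a+\b}$ keeps the dyadic quotients $u(2^{-i}h)/(2^{-i}h)$ bounded away from $u_t(t_0)$, contradicting differentiability; a density argument then upgrades the pointwise estimate to the full $C^{1,\a+\b-1}$ bound. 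You instead construct the candidate derivative directly as the limit of the dyadic backward quotients $\psi_k$, get H\"older continuity of that limit from the fixed-scale $C^\a$ hypothesis, and then do the extra work — the averaging/Riemann-sum identity $\psi_\t(t_0)=2^{-k}\sum_j\psi_{\t/2^k}(t_0-j\t/2^k)\to\t^{-1}\int_{t_0-\t}^{t_0}u_t$ — to identify the dyadic limit with the classical derivative. Your route is more self-contained (no appeal to the Lipschitz lemma, no density argument) and confronts head-on the genuine issue of why the dyadic limit is the actual derivative at every point, which the paper sidesteps by working only at a.e.\ differentiability points; the paper's version is shorter because that lemma is already at hand in the reference it cites. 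Both are valid; one cosmetic omission on your side is that you focus on the seminorm $[u_t]_{C^{\a+\b-1}}$, but the $L^\infty$ bounds on $u$ and $u_t$ making up the rest of $\|u_t\|_{C^{1,\a+\b-1}}$ follow immediately from $\psi_0$ plus the telescoping estimate, so that is not a real gap.
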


\begin{proof}
By Lemma 5.6 from \cite{Caffarelli95} we know that $u$ is Lipschitz and therefore differentiable almost everywhere. By a density argument it suffices to show that that for each point of differentiability $t_0\in(-1,1)$,
\begin{align*}
|u(t)-u(t_0)-u_t(t_0)(t-t_0)| \leq C|t|^{\a+\b} \text{ for } t \in [-1,1].
\end{align*}

Assume without loss of generality that $t_0 = u(t_0) = u_t(t_0) = 0$. If there exists $h\in(0,1]$ such that $u(h) > Ch^{\a+\b}$, then by iterating the hypothesis of the Lemma we get for every $i\in\N$,
\begin{align*}
\frac{u(2^{-i}h)}{2^{-i}h} > \1C-\sum_{j=0}^{i-1}2^{-(\a+\b-1)j}\2 h^{\a+\b-1} \geq \frac{C}{2}h^{\a+\b-1}>0,
\end{align*}
provided that $C = 4/(2^{\a+\b-1}-1)$. This contradicts $u_t(0)=0$ as $i\to\8$. 
\end{proof}

{\bf Acknowledgments:}
DK was partially supported by National Science Foundation grant DMS-1065926.

\bibliographystyle{plain}
\bibliography{mybibliography}

\begin{thebibliography}{10}

\bibitem{caffarelli2010evans}
Luis Caffarelli and Luis Silvestre.
\newblock On the evans-krylov theorem.
\newblock {\em Proceedings of the American Mathematical Society},
  138(1):263--265, 2010.

\bibitem{MR2831115}
Luis Caffarelli and Luis Silvestre.
\newblock The {E}vans-{K}rylov theorem for nonlocal fully nonlinear equations.
\newblock {\em Ann. of Math. (2)}, 174(2):1163--1187, 2011.

\bibitem{Caffarelli95}
Luis~A. Caffarelli and Xavier Cabr{\'e}.
\newblock {\em Fully nonlinear elliptic equations}, volume~43.
\newblock American Mathematical Society Colloquium Publications, 1995.

\bibitem{2014arXiv1408.5149L}
H{\'e}ctor Chang-Lara and Gonzalo {D\'avila}.
\newblock {$C^{\sigma+\alpha}$ estimates for concave, non-local parabolic
  equations with critical drift}.
\newblock {\em arxiv preprint arxiv:1408.5149}, 2014.

\bibitem{Davila12-p}
H{\'e}ctor Chang-Lara and Gonzalo D{\'a}vila.
\newblock Regularity for solutions of non local parabolic equations.
\newblock {\em Calc. Var. Partial Differential Equations}, 49(1-2):139--172,
  2014.

\bibitem{Crandall92}
Michael~G. Crandall, Hitoshi Ishii, and Pierre-Louis Lions.
\newblock User's guide to viscosity solutions of second order partial
  differential equations.
\newblock {\em Bull. Amer. Math. Soc.}, 27(1):1--67, 1992.

\bibitem{evans1982classical}
Lawrence~C Evans.
\newblock Classical solutions of fully nonlinear, convex, second-order elliptic
  equations.
\newblock {\em Communications on Pure and Applied Mathematics}, 35(3):333--363,
  1982.

\bibitem{Silvestre-Imbert-2013}
Cyril Imbert and Luis Silvestre.
\newblock An introduction to fully nonlinear parabolic equations.
\newblock In {\em An introduction to the {K}\"ahler-{R}icci flow}, volume 2086
  of {\em Lecture Notes in Math.}, pages 7--88. Springer, Cham, 2013.

\bibitem{Jensen89}
Robert Jensen.
\newblock Uniqueness criteria for viscosity solutions of fully nonlinear
  elliptic partial differential equations.
\newblock {\em Indiana Univ. Math. J.}, 38(3):629--667, 1989.

\bibitem{krylov1983boundedly}
Nikolai~Vladimirovich Krylov.
\newblock Boundedly nonhomogeneous elliptic and parabolic equations.
\newblock {\em Izvestiya: Mathematics}, 20(3):459--492, 1983.

\bibitem{Nadirashvili-2013}
Nikolai Nadirashvili and Serge Vl{\u{a}}du{\c{t}}.
\newblock Singular solutions of {H}essian elliptic equations in five
  dimensions.
\newblock {\em J. Math. Pures Appl. (9)}, 100(6):769--784, 2013.

\bibitem{Serra14-2}
Joaquim Serra.
\newblock $c^{\sigma+\alpha}$ regularity for concave nonlocal fully nonlinear
  elliptic equations with rough kernels.
\newblock {\em arXiv:1405.0930v1 [math.AP]}, 2014.

\bibitem{Serra14}
Joaquim Serra.
\newblock Regularity for fully nonlinear nonlocal parabolic equations with
  rough kernels.
\newblock {\em arXiv:1401.4521v3 [math.AP]}, 2014.

\bibitem{Wang92}
Lihe Wang.
\newblock On the regularity theory of fully nonlinear parabolic equations. i.
\newblock {\em Comm. Pure Appl. Math.}, 45(1):27--76, 1992.

\bibitem{Wang92-2}
Lihe Wang.
\newblock On the regularity theory of fully nonlinear parabolic equations.
  {II}.
\newblock {\em Comm. Pure Appl. Math.}, 45(2):141--178, 1992.

\end{thebibliography}

\end{document}